\documentclass[11pt, oneside]{article}   	
\usepackage{amssymb, amsmath, amsthm, newtxmath, mathtools, amsfonts, graphicx,color}
\usepackage{geometry,hyperref,amsmath}                		
\usepackage{graphicx}				
\usepackage{amssymb}

\newtheorem{theorem}{Theorem}[section]
\newtheorem{lemma}[theorem]{Lemma}
\newtheorem{corollary}[theorem]{Corollary}

\theoremstyle{remark}
\newtheorem{ex}[theorem]{Example}

\renewcommand{\H}{\mathcal{H}}
\newcommand{\R}{\mathbb{R}}
\newcommand \Z {\mathbb Z}
\newcommand \N {\mathbb N}

\newcommand \1 {\vmathbb{1}}

\renewcommand \sl {\mathrm{SL}}
\newcommand \gl {\mathrm{GL}}

\newcommand{\x}{{\vec x}}
\newcommand{\p}{{\vec p}}
\newcommand{\y}{{\vec y}}
\newcommand{\vol}{\mathrm{vol}}
\newcommand{\eqnref}[1]{(\ref{#1})}

\title{Density of Visible Lattice Points on Hyperplanes and their Intersections}
\author{Finnley Goss and Kelly McKinnie}

\begin{document}
\maketitle

\begin{abstract}
    A lattice point $\x=(x_1,\dots,x_n)\in\Z^{n}$ is said to be visible if the line segment between $\x$ and the origin contains no other lattice point. 
    In this paper, we compute the asymptotic density of visible lattice points on hyperplanes and their intersections. In particular, we show that the hyperplane $\vec a \cdot \x = b$ in $\R^{n}$ has visible point density $J_{n-1}(b)/b^{n-1}$ where $J$ is the Jordan totient function.  We extend this basic result to find the density of visible points on the intersection of hyperplanes and to the density of $k$-th power free points. Finally, for a fixed dimension $n$, we consider the closure of the set of all possible densities that occur.

\end{abstract}

\section{Introduction} 
\label{sec1}
A lattice point $\x=(x_1,...,x_n)\in\Z^n$ is said to be visible if the line segment between $\x$ and the origin intersects no other lattice points. Equivalently, a lattice point is visible if and only if $\gcd(\x)=\gcd(x_1,...,x_n)=1$. The density of visible lattice points in a bounded, $n$-dimensional region is given by the ratio of visible to total lattice points contained in the region. The asymptotic density of visible lattice points in $\Z^n$ is defined to be the limit of this bounded density as the diameter approaches infinity. It has long been known that the asymptotic density of visible lattice points in $\Z^n$ is ${1}/{\zeta(n)},$ where $\zeta$ is the Riemann-Zeta function (\cite{Nymann}, see also the classic references \cite{Apostol} and \cite{HardyWright}).

Numerous generalizations of the classic notion of density of visible points exist in the literature (e.g., \cite{Goins_2018}, \cite{lu_2023}, \cite{berend_2024}). In this paper we define and calculate the asymptotic density of visible lattice points which lie on hyperplanes and their intersections. In our main result, Theorem \ref{snfdensity}, we provide a formula for the density on any such domain. The proof uses $\gl_n(\Z)$ transformations of the domain, the Smith Normal Form and the sieve methods found in \cite{Counting-Problems} adapted to our setting. Let $a_i,b \in \Z$ with $\gcd(a_i)=1$. In Corollary \ref{generalhyperplanes} we show that the asymptotic density of visible points on the hyperplane $a_1x_1+\cdots+a_nx_n = b$ is $J_{n-1}(b)/b^{n-1} = \prod_{p|b}1-{1}/{p^{n-1}}$ where the $J$ is the Jordan totient and the product is over all primes dividing $b$.

In Section \ref{k-free sec}, we generalize the arguments used in Theorem \ref{snfdensity} to compute the density of $k$-visible lattice points on the same domains. For $k\in\Z_{>0}$, a lattice point $\x\in\Z^n$ is {\it $k$-visible} if $p^k\nmid\gcd(x)$ for all prime numbers $p$. For $k=1$, these are the visible lattice points. For $k=2$, these are the lattice points with square-free greatest common divisors. 

Lastly, in Section \ref{sec:ex} we consider the set of asymptotic densities of all hyperplanes in $\R^n$. For $n=2$, the densities of lines are all numbers of the form $\varphi(b)/b$ for $b \in \Z$. These numbers are dense in $[1/\zeta(1),0] = [0,1]$ by \cite{Schinzel}. In Theorem \ref{achievement set} we show that for $n>2$ the set of densities of visible points on hyperplanes is not dense in the interval $[1/\zeta(n-1),1]$.

\subsection{Preliminaries}
Fix $n \in \N$ and let $\Delta \subseteq \R^n$ be a nonempty set. To study the asymptotic density of visible lattice points in $\Delta\cap\Z^n$ we define the following subsets of $\Delta$:
\begin{align}
    \Delta(r)&=\Delta\cap[-r,r]^n\\
    \Delta_1(r)&=\Delta(r)\cap\Z^n\\
    \Delta_b(r)&=\{\x\in\Delta_1(r):\gcd(\x,b)=1\}
\end{align}
so that $\Delta_1(r)$ is the subset of lattice points in $\Delta(r)$ and $\Delta_b(r)$ is the subset of those lattice points that are coprime to $b$, that is, $\gcd(\x,b) = \gcd(x_1,\ldots,x_n,b)=1$. In particular, $\Delta_0(r)$ is the set of visible lattice points in $\Delta_1(r)$. We use $|\cdot |$ to indicate the size of a set. 

The asymptotic density of visible lattice points in $\Delta$, if it exists, is defined to be
\[\rho(\Delta)=\lim_{r\to\infty}\frac{|\Delta_0(r)|}{|\Delta_1(r)|}.\]

Taking $\Delta=\R^n$, the classic result is $\rho(\R^n)=1/\zeta(n)$ (\cite{Nymann}). In this paper we concern ourselves with $\rho(\H)$, where $\H$ is the intersection of finitely many hyperplanes in $\R^n$. For brevity, we drop the adjectives `asymptotic' and `lattice' and simply refer to $\rho(\Delta)$ as the density of visible points in $\Delta$.

The left hand side of Figure \ref{fig:den-def} illustrates the definition of the density of visible points on a line in 2-space. Many of our arguments will use a linear transformation that puts the hyperplane(s) in a nicer position, while skewing the bounding boxes, $[-r,r]^n$. This is illustrated on the right hand side of Figure \ref{fig:den-def}.  
\begin{figure}
    \centering
    \includegraphics[width=\linewidth]{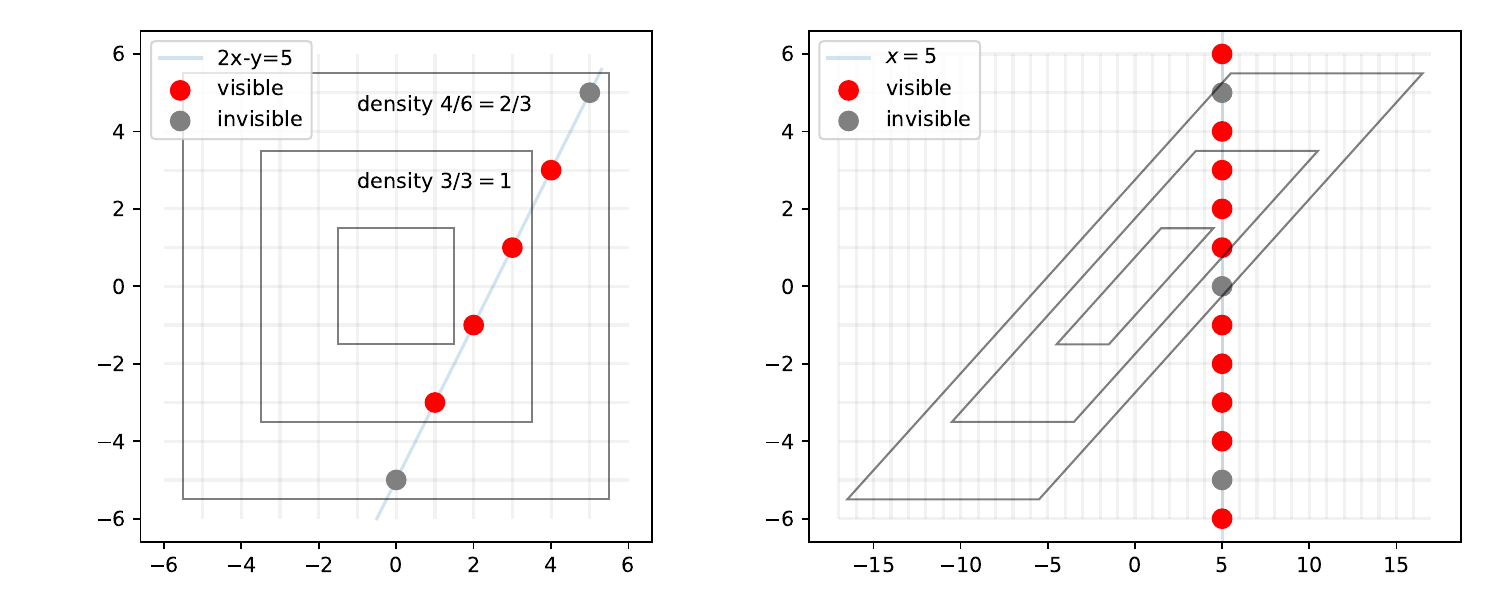}
    \caption{On the left is an illustration of bounding boxes used in the definition of asymptotic density of visible points in a hyperplane. On the right the picture has been transformed by the matrix $\begin{pmatrix}
        2&-1\\1&0
    \end{pmatrix}$ taking the line $2x-y=5$ to the line $x=5$ and shows the transformed bounding regions.}
    \label{fig:den-def}
\end{figure}
There is some flexibility in the bounding regions used to compute density; alternative regions have been used and studied in \cite{PrimPtsPolygons}, \cite[Thm 459]{HardyWright} and \cite{Counting-Problems}, for example. In particular, we use the following theorem from \cite{Counting-Problems} which we state here only in the special case where $f$ is the constant monotone function $f=1$:
\begin{theorem}[Theorem 2.1 of \cite{Counting-Problems} with $f=1$]
\label{counting density thm} Let $\Delta$ be a convex domain (compact and full dimension) of $\R^n$ of diameter $\omega \geq 1$ and let $\Delta_0$ be the subset of visible points in $\Delta$. Then 
\[\left| |\Delta_0|-\frac{\vol(\Delta)}{\zeta(n)}\right| = O\left(\begin{array}{ll}\omega\log\omega,&\textrm{if }n=2\\ \omega^{n-1},&\textrm{otherwise}\end{array}\right)\]
where the constant implied by the $O$-notation depends only on the dimension $n$.
\end{theorem}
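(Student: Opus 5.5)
This result is quoted from \cite{Counting-Problems}, so here is how one would reprove it from scratch in the special case $f=1$. The approach is Möbius inversion combined with a lattice-point count for dilates of a convex body. Since $\sum_{d\mid \gcd(\x)}\mu(d)$ equals $1$ when $\x$ is visible and $0$ otherwise (for $\x\neq 0$), we have
\[
|\Delta_0| = \sum_{d\ge 1}\mu(d)\,\bigl|\Delta\cap d\Z^n\setminus\{0\}\bigr| = \sum_{d\ge1}\mu(d)\,\bigl|(\tfrac1d\Delta)\cap\Z^n\setminus\{0\}\bigr|.
\]
Because $\Delta$ has diameter $\omega$, the set $\frac1d\Delta\cap\Z^n\setminus\{0\}$ is empty as soon as $d>\omega$ plus a bounded constant. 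We need not assume the origin lies in $\Delta$: if $\Delta$ misses a neighbourhood of $0$, the same bound still holds, because every nonzero point of $\Delta$ has norm at most $\omega+\mathrm{dist}(0,\Delta)$. It is cleaner, though, to cut the sum at $d\le D$, where $D$ is the largest norm of a lattice point in $\Delta$, and treat that truncation separately. So the sum effectively runs over $d\le D$.

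The key step is a uniform estimate for lattice points in a convex body $K$ of diameter $\omega'$:
\[
\bigl||K\cap\Z^n|-\vol(K)\bigr| \ll \sum_{j=0}^{n-1}V_j(K)\ll 1+\omega'^{\,n-1},
\]
where the $V_j$ are intrinsic volumes. This is the standard Davenport/Lipschitz-principle bound: unit cubes centred at lattice points of $K$ lie in the $\sqrt n$-neighbourhood of $K$, and those meeting $K$ cover its inner parallel body, so Steiner's formula controls the error by the lower intrinsic volumes. Applying it with $K=\frac1d\Delta$ gives
\[
|\Delta_0|=\sum_{d\le D}\mu(d)\Bigl(\frac{\vol(\Delta)}{d^n}+O\bigl(1+(\omega/d)^{n-1}\bigr)\Bigr).
\]

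Completing the main term to the full series costs $\vol(\Delta)\sum_{d>D}d^{-n}\ll \omega^n D^{1-n}$. This is $O(\omega)$ once $D\asymp\omega$. If $D$ is much smaller than $\omega$, then $\Delta$ contains few lattice points, and since $\Delta$ is full-dimensional and convex one compares directly instead. I expect this degenerate regime to be the main obstacle. To handle it, I would first translate-normalize, noting that $\Delta$ must lie within distance $\omega$ of the origin for the count to be nontrivial, and then take $D:=C\omega$ with the trivial bound $|(\frac1d\Delta)\cap\Z^n|\le 1+O((\omega/d)^{n-1})+\vol/d^n$ for $d$ up to $C\omega$. Summing the error terms gives
\[
\sum_{d\le C\omega}\Bigl(1+\frac{\omega^{n-1}}{d^{n-1}}\Bigr)\ll \omega+\omega^{n-1}\sum_{d\le C\omega} d^{1-n}.
\]
For $n\ge3$ the last sum converges, giving $O(\omega^{n-1})$. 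For $n=2$ it is the harmonic sum, giving $O(\omega\log\omega)$. Finally, $\sum_d\mu(d)d^{-n}=1/\zeta(n)$ yields the claimed bound, with constants depending only on $n$ through the lattice-point lemma.

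Beyond the truncation issue just described, the other delicate point is making sure the lattice-point lemma's constant depends only on $n$ and not on the shape of $\Delta$. This holds because intrinsic volumes are monotone under inclusion, so they are bounded by those of a ball of radius $\omega$.
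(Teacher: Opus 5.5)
Your skeleton (M\"obius inversion over $d$, Lemma \ref{counting lemma} applied to each dilate $\Delta/d$, and the harmonic sum giving $\omega\log\omega$ for $n=2$) is the same sieve the paper runs in the proof of Theorem \ref{our bigO}, following the cited source. The gap is in truncating the sum over $d$, a problem the finite sum over $d\mid b$ in Theorem \ref{our bigO} never faces.

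Your claim that $\frac1d\Delta\cap\Z^n\setminus\{0\}$ is empty once $d$ exceeds $\omega$ plus a constant is false. The diameter controls the size of $\Delta$, not its distance from the origin, and your own bound $\omega+\mathrm{dist}(0,\Delta)$ is not ``the same bound''. The proposed repair also fails. You cannot translate-normalize, because translation does not preserve $\gcd(\x)$. Moreover, a convex body far from the origin can contain $\asymp\omega^n$ lattice points, so its count is far from trivial. If $\Delta$ lies at distance $R\gg\omega$ from $0$, then $D\ge R$, and your error estimate $\sum_{d\le D}\bigl(1+(\omega/d)^{n-1}\bigr)$ is $\gg R$, which is not bounded in terms of $\omega$. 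The regime you single out as the main obstacle, $D$ small compared with $\omega$, is in fact harmless. All lattice points of $\Delta$ then lie in a ball of radius $D$, so $\vol(\Delta)\ll D^n+\omega^{n-1}$ by Lemma \ref{counting lemma}, and the tail $\vol(\Delta)\sum_{d>D}d^{-n}$ is $O(D+\omega^{n-1})$.

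No argument can close this gap, because the statement as quoted is false without a hypothesis tying $\Delta$ to the origin. Here is a counterexample. Let $L\ge 1$ be an integer, and pick distinct primes $p_{\vec j}$, one for each $\vec j\in\{0,\ldots,L\}^n$. By the Chinese remainder theorem, choose positive integers $N_i$ with $N_i\equiv -j_i \pmod{p_{\vec j}}$ for all $i$ and all $\vec j$. Then every lattice point $(N_1+j_1,\ldots,N_n+j_n)$ of $\Delta=\prod_{i=1}^n[N_i,N_i+L]$ has all coordinates divisible by $p_{\vec j}$, so $|\Delta_0|=0$. Meanwhile $\vol(\Delta)/\zeta(n)=L^n/\zeta(n)$ and $\omega=L\sqrt n$, so the error has order $\omega^n$.

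The statement therefore needs an extra hypothesis such as $\Delta\subseteq\{\x:\|\x\|\le c\,\omega\}$. This holds with $c=1$ in every application in this paper, since there $\Delta$ contains the origin: the cubes $[-r,r]^n$, and the $b=0$ case of Theorem \ref{snfdensity}. Under that hypothesis, set $D=c\,\omega$ outright. Then $(\Delta/d)\cap\Z^n\setminus\{0\}=\emptyset$ for $d>c\,\omega$, the main-term tail is $\ll\omega^n D^{1-n}=O(\omega)$, and the errors sum to $O\bigl(\omega+\omega^{n-1}\sum_{d\le c\omega}d^{1-n}\bigr)$. With that, your argument goes through as written.
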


The diameter of a set is the largest distance between points in the set 
and as noted in \cite[(2)]{Counting-Problems}, $\vol(\Delta) = O(\omega^n)$. The classic density theorem mentioned above (\cite{Nymann}) is obtained from Theorem \ref{counting density thm} by taking $\Delta = [-r,r]^n$ with $\vol(\Delta) = (2r)^n$, diameter $\omega = 2r\sqrt n $ and $|\Delta_1| =(2r+1)^n$. Dividing both sides of the equation from Theorem \ref{counting density thm} by $(2r+1)^n$ and letting $r \to \infty$ shows the density of visible points is $\rho(\R^n)=1/\zeta(n)$. 

In Section \ref{sec2}, Theorem \ref{our bigO}, we extend Theorem \ref{counting density thm} to give the density of points coprime to a fixed integer $b$ rather than the density of all visible points. The constant replacing $\zeta(n)$ in the equation is related to the Jordan totient of $|b|$, a variant of Euler's totient function. This generalization is used to prove the density of visible points on all hyperplanes and their intersections in Theorem \ref{snfdensity}.

For $n,b\in\Z_{>0}$, the Jordan totient function, $J_n(b)$, is defined to be the number of integer $n$-tuples selected from a complete set of residues modulo $b$ with $\gcd$ coprime to $b$ (\cite[pg 155 - 174]{Dickson}, \cite{TotientQuotients}). The Jordan totient function has closed formula \[J_n(b) = \sum_{d|b}\mu(d)\frac{b^n}{d^n} = b^n\prod_{p|b}1-\frac1{p^n}\] where the product is over all distinct primes dividing $b$ (see e.g., \cite[Ch 2, exercise 17]{Apostol}). 

Though we provide a rigorous argument in Section \ref{sec3}, we first present here a heuristic argument using the Jordan totient as to why the density of visible points on $\H$, the hyperplane defined by $x_1 = b$, should be $J_{n-1}(b)/b^{n-1}$. By its above definition, $J_{n-1}(b)$ is the number of visible lattice points in $\{b\}\times [0,b-1]^{n-1}\subset\H$. Translations of this set by linear combinations of $b\vec e_i$ for $2\leq i\leq n$, where $\vec e_i$ is the $i$-th unit normal basis vector, form a regular tiling of $\H$. Each tile has exactly $J_{n-1}(b)$ visible lattice points and $b^{n-1}$ total lattice points, meaning they each have a ratio of visible to total lattice points equaling $J_{n-1}(b)/b^{n-1}$. Therefore it is expected that the density of visible points in $\mathcal H$ is
\[\rho(\H)=\frac{J_{n-1}(b)}{b^{n-1}} = \prod_{p|b}1-\frac1{p^{n-1}}.\]
We prove this result for hyperplanes of the form $x_1=b$ in Lemma \ref{easy hyperplane} and for all hyperplanes $\H\subset\R^n$ defined by $\vec a\cdot \vec x=\sum_{i=1}^na_ix_i=b$ in Corollary \ref{generalhyperplanes} and a corresponding result for intersections of hyperplanes in Theorem \ref{snfdensity}.

Note here that we only concern ourselves with hyperplanes that are defined by $\vec a\cdot \vec x=b$ with $\vec a \in \Z^n$ and $\gcd(\vec a)=1$. If $\gcd(\vec a)\neq 1$ and $\gcd(\vec a)\nmid b$, then $\H$ contains no integral points and $\rho(\mathcal H)$ does not exist. If $\gcd(\vec a)\mid b$, then divide the equation by $\gcd(\vec a)$ and proceed.

Finally, we state here another result of \cite{Counting-Problems} relating the number of integral points in $\Delta$ to the domain's volume that we will use in Sections \ref{sec2} and \ref{sec3}:
\begin{lemma}
    \label{counting lemma}
    (\cite[Lemma 2.3]{Counting-Problems})
    Under the same hypothesis of Theorem \ref{counting density thm} and $\Delta_1$ the set of integral points of $\Delta$,
    \[\left||\Delta_1|-\vol(\Delta)\right| = O\left(\omega^{n-1}\right).\] 
\end{lemma}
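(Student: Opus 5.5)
The plan is to compare $|\Delta_1|$ with $\vol(\Delta)$ by placing a unit cube at each lattice point. Let $Q=[-\tfrac12,\tfrac12)^n$ and set $t=\sqrt n/2$, the circumradius of $Q$. The translates $\x+Q$ for $\x\in\Z^n$ tile $\R^n$, and each has volume $1$. Hence $|\Delta_1|=\vol\bigl(\bigcup_{\x\in\Delta_1}(\x+Q)\bigr)$.

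The argument will be a two-sided sandwich between an outer and an inner parallel body of $\Delta$. Define
\[\Delta^+=\{y\in\R^n:\ d(y,\Delta)\le t\},\qquad \Delta^-=\{y\in\Delta:\ d(y,\R^n\setminus\Delta)\ge t\}.\]
\emph{Upper inclusion.} Every point of $\x+Q$ lies within distance $t$ of $\x$. So if $\x\in\Delta$, then $\x+Q\subseteq\Delta^+$, which gives $|\Delta_1|\le\vol(\Delta^+)$.
\emph{Lower inclusion.} Take $y\in\Delta^-$. It lies in a unique cube $\x+Q$ with $\x\in\Z^n$, and $|\x-y|\le t$, so $\x\in\Delta$ by the definition of $\Delta^-$. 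Thus $\Delta^-\subseteq\bigcup_{\x\in\Delta_1}(\x+Q)$, which gives $\vol(\Delta^-)\le|\Delta_1|$.
It therefore suffices to show that both $\vol(\Delta^+)-\vol(\Delta)$ and $\vol(\Delta)-\vol(\Delta^-)$ are $O(\omega^{n-1})$, with constants depending only on $n$.

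For the outer body I will use the Steiner formula for convex bodies:
\[\vol(\Delta^+)=\vol(\Delta+tB)=\sum_{j=0}^{n}\binom nj W_j(\Delta)\,t^j,\]
where $B$ is the unit ball and the $W_j$ are the quermassintegrals. Quermassintegrals are monotone under inclusion of convex bodies. Since $\Delta$ lies in a ball of radius $\omega$, this gives $W_j(\Delta)\le W_j(\omega B)=\omega^{n-j}\vol(B)$. The $j=0$ term equals $\vol(\Delta)$. Every remaining term is at most $C_n\omega^{n-j}$ with $j\ge1$, hence $O(\omega^{n-1})$ because $\omega\ge1$ and $t$ depends only on $n$. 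If I prefer to avoid quoting Steiner, an alternative is to cover $\Delta^+$ by the cube grid directly, but the Steiner route is cleaner.

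For the inner body, $\Delta\setminus\Delta^-$ is the set of points of $\Delta$ within distance $t$ of $\partial\Delta$. The key fact is that its volume is at most $t\cdot S(\Delta)$, where $S$ denotes surface area. I would prove this via the coarea formula applied to the distance function to the boundary, combined with the fact that the inner parallel sets $\{d(\cdot,\partial\Delta)\ge s\}$ are convex, decrease in $s$, and so have surface area at most $S(\Delta)$. Monotonicity of surface area under inclusion of convex sets then gives $S(\Delta)\le S(\omega B)=O(\omega^{n-1})$.

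The main obstacle is making this inner-shell estimate rigorous for a general convex body, in particular justifying that surface area is monotone and that the coarea bound holds without smoothness. If that becomes delicate, I would instead use a cruder covering argument. The shell lies inside the set of grid cubes of side $1$ that meet $\partial\Delta$, or a fixed enlargement of that set. The number of such cubes can be bounded by projecting onto a coordinate hyperplane: each line parallel to a coordinate axis meets the convex set $\partial\Delta$ in at most two points or one segment. This gives $O(\omega^{n-1})$ cubes directly.
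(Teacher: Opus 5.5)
The paper gives no proof of this lemma; it simply quotes \cite[Lemma 2.3]{Counting-Problems}. So your argument is a self-contained alternative rather than a reconstruction, and your main route is correct.

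\begin{itemize}
\item The sandwich $\vol(\Delta^-)\le|\Delta_1|\le\vol(\Delta^+)$ holds. The lower inclusion uses that $\Delta$ is closed, so that $d(y,\R^n\setminus\Delta)\ge t$ puts the closed ball of radius $t$ about $y$ inside $\Delta$.
\item Steiner's formula together with $W_j(\Delta)\le W_j(\omega B)$ gives $\vol(\Delta^+)-\vol(\Delta)\le\kappa_n\bigl((\omega+t)^n-\omega^n\bigr)$, where $\kappa_n=\vol(B)$.
\item The shell bound $\vol(\Delta\setminus\Delta^-)\le t\,S(\Delta)\le t\,n\kappa_n\omega^{n-1}$ is standard, so the step you flagged as the main obstacle is not actually an obstacle. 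On $\Delta$, the function $y\mapsto d(y,\R^n\setminus\Delta)$ is an infimum of affine functions, one per supporting half-space, hence concave. Its superlevel sets are therefore convex bodies, and surface area is monotone on convex bodies by Cauchy's projection formula.
\end{itemize}

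What your route buys is a fully explicit, uniform bound,
\[\left||\Delta_1|-\vol(\Delta)\right|\le\kappa_n\bigl((\omega+\sqrt n/2)^n-\omega^n\bigr).\]
The price is Steiner's formula, monotonicity of quermassintegrals and the coarea formula. The paper pays only a citation.

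Your fallback, however, does not work as written. First, $\partial\Delta$ is not convex; what you mean is that each coordinate line meets $\Delta$ in a segment. Second, that fact does not bound the number of unit cubes in a single column that meet $\partial\Delta$. If a face of $\Delta$ is nearly parallel to $\vec e_n$, one column can contain on the order of $\omega$ such cubes.

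There are two correct ways to repair this.
\begin{itemize}
\item Use the line property to count lattice points on each coordinate line, with error at most $1$ per line, and induct over projections onto coordinate subspaces. This is Davenport's lemma. It gives $\left||\Delta_1|-\vol(\Delta)\right|\le\sum_{m=0}^{n-1}V_m$, where $V_m$ is the total $m$-volume of the coordinate projections of $\Delta$ (and $V_0=1$), hence $O(\omega^{n-1})$.
\item Split $\partial\Delta$ according to the coordinate direction its outer normal is closest to. Each piece is then a graph of bounded slope over a coordinate hyperplane, and the column count works for each piece.
\end{itemize}
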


\section{A generalization and some lower bounds}
\label{sec2}
Theorem \ref{our bigO} generalizes Theorem \ref{counting density thm}, allowing us to measure the density of points in a given domain that are coprime to a fixed integer $b$ rather than visible (coprime to every integer). We make use of the well-known identity
\begin{equation}
    \sum_{d|b}\frac{\mu(d)}{d^n} = \prod_{p|b} 1-\frac{1}{p^n}
    \label{e5}
\end{equation}
(\cite{Apostol})
as a finite version of the Euler product for $1/\zeta(n)$ used in the proof of Theorem $\ref{counting density thm}$.

Let $\Delta$ be a convex domain of full dimension in $\R^n$ with diameter $\omega$. As in Section \ref{sec1}, $\Delta_1$ is the set of integral points in $\Delta$ and $\Delta_b\subseteq \Delta_1$ the subset of points coprime to $b$.
\begin{theorem}
    \label{our bigO}
    For $b \ne 0$,
    \[\left||\Delta_b| - \left(\prod_{p|b}1-\frac{1}{p^{n}}\right)\vol(\Delta)\right| =O(\omega^{n-1})
    \]
    where the product is over all distinct primes dividing $b$.
\end{theorem}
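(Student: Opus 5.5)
The plan is a finite Möbius inversion over the divisors of $b$, applied directly to the lattice point count in $\Delta$. Since $b\neq 0$, we may replace $b$ by $|b|$ and assume $b>0$. Then $\gcd(\x,b)=1$ exactly when no prime $p\mid b$ divides every coordinate of $\x$. Using $\sum_{d\mid m}\mu(d)=[m=1]$ with $m=\gcd(\x,b)$, we get
\[
|\Delta_b| = \sum_{\x\in\Delta_1}\sum_{d\mid \gcd(\x,b)}\mu(d) = \sum_{d\mid b}\mu(d)\,\bigl|\Delta_1\cap d\Z^n\bigr|.
\]
This sum has only $\tau(b)$ terms, a number independent of $\Delta$. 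That finiteness is what lets us avoid the $\log\omega$ loss that appears in Theorem~\ref{counting density thm} when $n=2$.

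Next we count the points of $d\Z^n$ in $\Delta$. The map $\x\mapsto \x/d$ gives a bijection $\Delta_1\cap d\Z^n \leftrightarrow (\tfrac1d\Delta)\cap\Z^n$. The set $\tfrac1d\Delta$ is convex, compact, and full dimensional, with volume $\vol(\Delta)/d^n$ and diameter $\omega/d\le\omega$. We apply Lemma~\ref{counting lemma} to it:
\[
\bigl|\Delta_1\cap d\Z^n\bigr| = \frac{\vol(\Delta)}{d^n} + O\bigl((\omega/d)^{n-1}\bigr).
\]
Substituting this into the Möbius sum and using identity \eqref{e5}, the main terms combine to $\bigl(\prod_{p\mid b}(1-p^{-n})\bigr)\vol(\Delta)$. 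The error terms add up to at most $\sum_{d\mid b}|\mu(d)|\,C(\omega/d)^{n-1}\le C\,\omega^{n-1}\sum_{d\mid b}1$, which is $O(\omega^{n-1})$ with an implied constant depending on $n$ and $b$. This is acceptable, because $b$ is fixed.

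The main obstacle is that Lemma~\ref{counting lemma} requires diameter at least $1$, and $\tfrac1d\Delta$ may be smaller when $d>\omega$. For those $d$, the set $\tfrac1d\Delta$ has diameter below $1$, so it contains at most $2^n$ lattice points: any two of its lattice points are at distance less than $1$, hence differ in no coordinate. Its volume is at most $c_n(\omega/d)^n\le c_n$. The count and the volume therefore differ by $O(1)=O(\omega^{n-1})$, using $\omega\ge1$ (the same hypothesis as Theorem~\ref{counting density thm}). Since there are only finitely many such $d$, the total error is still $O(\omega^{n-1})$, which completes the proof.

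We could instead make the constant independent of $b$ by using the sharper bound $(\omega/d)^{n-1}$ together with the count of $d$'s that are large relative to $\omega$. This is not needed here, since $b$ is fixed throughout the paper's applications.
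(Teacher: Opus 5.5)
Your proof is correct and is essentially the paper's argument: the same finite M\"obius expansion $|\Delta_b|=\sum_{d\mid b}\mu(d)|\Lambda_d|$, Lemma~\ref{counting lemma} applied to the scaled domain $\Delta/d$, and identity~\eqref{e5} to combine the main terms. Your separate treatment of divisors $d>\omega$ fills in a hypothesis check (diameter at least $1$) that the paper skips; there $\Delta/d$ has diameter below $1$ and so contains at most one lattice point, which is sharper than the $2^n$ you state. Drop the closing remark about a $b$-independent constant: the proof does not use it, and for $n=2$ the route you sketch only gives $\sum_{d\le\omega}\omega/d\asymp\omega\log\omega$, the same loss as in Theorem~\ref{counting density thm}.
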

    Note that if $b=0$ then $\Delta_b=\Delta_0$ is the subset of visible points in $\Delta$ and the product in the statement of Theorem \ref{our bigO} is infinite and equal to $1/\zeta(n)$. The theorem holds in this case by Theorem \ref{counting density thm}, except for $n=2$, where the bound is $\omega\log\omega$. The proof of Theorem \ref{our bigO} is somewhat simpler than that of Theorem \ref{counting density thm} because the product is finite but otherwise follows nearly identically.
    \begin{proof}
    The proof follows the argument of \cite[proof of 2.1]{Counting-Problems}. For any $d \in \Z_{>0}$, let $\Lambda_d=\Delta_1\cap(d\Z)^n$ be the subset of points with each component divisible by $d$. Then
    \begin{align}
    |\Delta_b| = \sum_{d|b} \mu(d)|\Lambda_d|.\label{e4}
    \end{align}
    To compute $|\Lambda_d|$ for $d|b$ let $\Delta/d = \{\x \in \R^n:d\x \in \Delta\}$ so that $|\Lambda_d| = |(\Delta/d)_1|$. $\Delta/d$ is a convex domain of full dimension and diameter $\omega/d$. Hence by Lemma \ref{counting lemma} we have
\begin{align}
 O(\omega^{n-1})=O((\omega/d)^{n-1})&=\Big||(\Delta/d)_1|-\vol(\Delta/d)\Big| = \Big||(\Delta/d)_1|-\vol(\Delta)/d^n\Big|.\label{e66}
\end{align}
Use \eqnref{e5}, \eqnref{e4}, \eqnref{e66} and the triangle inequality to sum on the divisors of $b$:
\begin{align}
    \left||\Delta_b|-\left(\prod_{p|b}1-\frac{1}{p^n}\right)\vol(\Delta)\right|
    &=\left|\sum_{d|b} \mu(d)|\Lambda_d|-\sum_{d|b}\frac{\mu(d)}{d^n}\vol(\Delta)\right|\nonumber\\
    &=\left|\sum_{d|b}\mu(d)\left(|\Lambda_d|-\vol(\Delta)/d^n\right)\right|\nonumber\\
    &\leq\sum_{d|b}|\mu(d)|\Big||(\Delta/d)_1|-\vol(\Delta)/d^n\Big|\nonumber\\
    &= \sum_{d|b}|\mu(d)|O(\omega^{n-1})= O(\omega^{n-1}).\label{e67}
\end{align}
\end{proof} 

In Section \ref{sec3}, the convex domains of Theorem \ref{our bigO} are hyperplane sections of $n$-dimensional cubes or transformations of them by $\gl_n(\Z)$. While the combinatorial type of the polyhedra which is the intersection of a $n$-cube and a hyperplane can be complicated, a general formula for the volume is known (\cite{Frank}) and there is also a formula for the number of lattice points in the intersection (\cite{Abel}). We need something similar but more basic in order to use Theorem \ref{our bigO} to attain the density of visible points on hyperplanes; we need a lower bound on the size of $\Delta_1(r)=\H\cap[-r,r]^{n}\cap\Z^n$, where $\H\subset\R^n$ is a fixed hyperplane. This lower bound is Corollary \ref{lower bound}. It follows from Lemma \ref{general lower bound} which gives a lower bound on the number of integral points on intersections of hyperplanes and is used again in the proof of Theorem \ref{snfdensity}.

\begin{lemma}
    \label{general lower bound}
    Let $\H_i$, $1\leq i \leq s$, be hyperplanes in $\R^n$ given by equations $\sum_{j=1}^na_{ij}x_j = b_i$ and set $A = (a_{ij}) \in M_{s,n}(\Z)$. Let $\Delta(r) = \bigcap_{i=1}^s\H_i \cap[-r,r]^n$ and $\Delta_1(r)\subset \Delta(r)$ the subset of integral points. Let $r_0$ be the smallest positive integer such that $\Delta_1(r)\ne \emptyset$. For $r\geq r_0$,
    \begin{align}
        |\Delta_1(r)| \geq \left(2\left\lfloor \frac{r-r_0}{c}\right\rfloor+1\right)^{n-m}
    \end{align}
    where $m=\mathrm{rank}(A)$, the null-space of $A$ is spanned by $\vec v_j = (v_{j1},\ldots,v_{jn})$ for $0\leq j \leq n-m$ and $c=\max(\sum_{j=1}^{n-m}|v_{jk}|)_{k=1}^{n}$. In particular, $|\Delta_1(r)| = \Omega(r^{n-m})$.
\end{lemma}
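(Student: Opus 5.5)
Fix a point $\p\in\Delta_1(r_0)$, so $\p\in\Z^n$, $A\p=\vec b$ and $\|\p\|_\infty\le r_0$. Every integral point of $\bigcap_i\H_i$ is $\p$ plus an integral vector in the null space of $A$. Since $A$ has integer entries and rank $m$, its rational null space has dimension $n-m$. I will take the spanning vectors $\vec v_1,\dots,\vec v_{n-m}$ to be an integral basis of it. This can be arranged by clearing denominators of a rational basis; alternatively, the Smith Normal Form gives a unimodular $V$ whose last $n-m$ columns form a $\Z$-basis of $\ker A\cap\Z^n$. Either choice works: we only need the $\vec v_j$ integral and linearly independent.

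Consider the lattice box of coefficients $\vec t=(t_1,\dots,t_{n-m})\in\Z^{n-m}$ with $|t_j|\le T$, where $T=\lfloor (r-r_0)/c\rfloor$. For each such $\vec t$, put $\x=\p+\sum_j t_j\vec v_j$. Then $\x\in\Z^n$ and $A\x=\vec b$, so $\x$ lies on every $\H_i$. For the $k$-th coordinate,
\[|x_k|\le |p_k|+\sum_{j=1}^{n-m}|t_j||v_{jk}|\le r_0+T\sum_{j=1}^{n-m}|v_{jk}|\le r_0+Tc\le r_0+(r-r_0)=r,\]
using the definition of $c$ as the maximum over $k$ of the column sums. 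Hence $\x\in\Delta_1(r)$.

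The map $\vec t\mapsto \x$ is injective because the $\vec v_j$ are linearly independent. Therefore $|\Delta_1(r)|\ge (2T+1)^{n-m}$, which is the stated bound. For the asymptotic statement, $2\lfloor (r-r_0)/c\rfloor+1\ge 2(r-r_0)/c-1$, and this is $\gg r$ for $r$ large since $c$ and $r_0$ are fixed. So $|\Delta_1(r)|=\Omega(r^{n-m})$.

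The main subtlety is making sure the $\vec v_j$ are integral, so that each $\x$ is a lattice point, and independent, so that distinct $\vec t$ give distinct points. The statement leaves the choice of basis implicit, and as noted above an integral basis always exists. A degenerate case to mention is $c=0$, which happens only when $m=n$. Then the set of integral points of the intersection is the single point $\p$, and the bound reads $1\ge 1$ with the convention that the exponent is $0$.
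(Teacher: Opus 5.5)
Your proof is correct and follows the paper's argument: fix $\p\in\Delta_1(r_0)$, translate it by integer combinations $\sum_j t_j\vec v_j$ with $|t_j|\le\lfloor (r-r_0)/c\rfloor$, and bound each coordinate by $r_0+\lfloor (r-r_0)/c\rfloor\, c\le r$. You are more careful than the paper on three points it leaves implicit: the $\vec v_j$ must be integral (the bound can fail for a non-integral spanning set), they must be linearly independent so that the $(2T+1)^{n-m}$ points are distinct, and the degenerate case $m=n$ (where $c=0$) needs separate mention.
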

\begin{proof}
    Let $\p \in \Delta_1(r_0)$. All other integral points on $\bigcap_{i=1}^s \H_i$ are of the form $\p+\sum_{j=1}^{n-m}c_j\vec v_j$ for $c_j \in \Z$. When $|c_j|\leq \left\lfloor\frac{r-r_0}{c}\right\rfloor$ for all $j$, the $k$th component of $\p+\sum_{j=1}^{n-m}c_j\vec v_j$ has bounded magnitude:
    \begin{align*}
        |p_k+\sum_{j=1}^{n-m} c_jv_{jk}|&\leq |p_k|+\sum_{j=1}^{n-m} |c_j||v_{jk}|\\
        &\leq r_0+\left\lfloor\frac{r-r_0}{c}\right\rfloor\sum_{j=1}^{n-m}|v_{jk}|\\
        &\leq r_0+r-r_0=r
    \end{align*}
    That is, $\p+\sum_{j=1}^{n-m}c_j\vec v_j \in [-r,r]^n$, showing the lower bound in the lemma. For $r\gg r_0$, $\left\lfloor \frac{r-r_0}{c}\right\rfloor^{n-m} \geq \left(\frac{r-r_0}{c}-1\right)^{n-m} = \frac{1}{c^{n-m}}(r^{n-m}+\cdots)$, hence $|\Delta_1(r)| = \Omega(r^{n-m})$.
\end{proof}

In the case of only one hyperplane there is a nicer formulation for the constant $c$ from Lemma \ref{general lower bound}.
\begin{corollary}
    \label{lower bound}
    Let $\H$ be the hyperplane in $\R^{n}$ given by equation $\sum_{i=1}^n a_ix_i=b$. Let $\Delta(r) = \H\cap [-r,r]^{n}$ and $\Delta_1(r)\subset \Delta(r)$ the subset of integral points. Let $r_0$ be the smallest positive integer such that $\Delta_1(r_0)\neq\emptyset$. For $r\geq r_0$, 
    \[
    |\Delta_1(r)| \geq \left \lfloor \frac{r-r_0}{\sum |a_i|}\right\rfloor^{n-1}.
    \]
\end{corollary}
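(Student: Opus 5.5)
The plan is to rerun the proof of Lemma \ref{general lower bound} in the case $s=1$, $m=\mathrm{rank}(A)=1$, but with an explicit choice of null-space vectors whose coordinate sums are bounded by $\sum|a_i|$. I would not invoke the lemma as a black box, because its proof states that every integral point of the hyperplane is $\p+\sum c_j\vec v_j$, i.e.\ it implicitly uses a $\Z$-basis of the kernel lattice. For a lower bound we only need distinct integral points inside the box, so any $n-1$ linearly independent integral kernel vectors suffice.

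First I fix the vectors. Since $\gcd(\vec a)=1$, some coefficient $a_k\neq 0$. For each $j\neq k$ set $\vec v_j=a_k\vec e_j-a_j\vec e_k$. Then $\vec a\cdot\vec v_j=a_ka_j-a_ja_k=0$, so each $\vec v_j$ lies in the null space of $A=(a_1,\dots,a_n)$. These $n-1$ vectors are linearly independent: the $j$-th coordinate of $\vec v_j$ is $a_k\neq0$, and the $j$-th coordinate of every other $\vec v_{j'}$ is $0$.

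Next I compute the constant $c$ of Lemma \ref{general lower bound} for this family. In coordinate $\ell\neq k$, only $\vec v_\ell$ is nonzero, so the sum of absolute values is $|a_k|$. In coordinate $k$, the sum is $\sum_{j\neq k}|a_j|$. Both quantities are at most $S:=\sum_{i=1}^n|a_i|$, so we may take $c\le S$.

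Now I repeat the lemma's argument. Take $\p\in\Delta_1(r_0)$ and integers $c_j$ with $|c_j|\le\lfloor (r-r_0)/S\rfloor$. The same triangle-inequality estimate as in the lemma gives
\[
\Bigl|p_\ell+\sum_{j\ne k}c_jv_{j\ell}\Bigr|\le r_0+\Bigl\lfloor\frac{r-r_0}{S}\Bigr\rfloor S\le r
\]
for every coordinate $\ell$. Each such point is integral and lies on $\H$, since $\vec a\cdot\p=b$ and $\vec a\cdot\vec v_j=0$. By linear independence of the $\vec v_j$, distinct tuples $(c_j)$ give distinct points. Hence
\[
|\Delta_1(r)|\ge\Bigl(2\Bigl\lfloor\frac{r-r_0}{S}\Bigr\rfloor+1\Bigr)^{n-1}\ge\Bigl\lfloor\frac{r-r_0}{S}\Bigr\rfloor^{n-1}.
\]

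The only real obstacle is choosing null-space vectors with controlled coordinate sums without needing them to be a lattice basis. The vectors $a_k\vec e_j-a_j\vec e_k$ handle this directly, and the rest is the routine estimate already carried out in Lemma \ref{general lower bound}.
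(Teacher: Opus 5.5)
Your proof is correct and is the paper's argument. The paper also takes the kernel vectors $a_j\vec e_1 - a_1\vec e_j$ (pivoting on some $a_1\neq 0$), notes that their coordinatewise absolute sums are at most $\sum|a_i|$, and applies Lemma \ref{general lower bound} with rank $1$. Your observation that only integrality and linear independence of these vectors matter is a correct tightening: they are generally not a $\Z$-basis of the kernel lattice (e.g.\ $\vec a=(2,1,1)$ with pivot $a_1$), so the lemma's sentence ``all other integral points are of the form $\p+\sum c_j\vec v_j$'' is false here, but the lower bound never uses it.
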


\begin{proof}
    Without loss of generality, assume $a_1 \ne 0$. The row space of 
    \begin{align}
        \begin{pmatrix}
            a_2&-a_1&0&\cdots&0\\
            a_3&0&-a_1&\cdots&0\\
            &&\ddots&&\\
            a_n&0&0&\cdots&-a_1
        \end{pmatrix}
    \end{align}
    is a basis of the null-space from Lemma \ref{general lower bound}, so that $c=\sum_{i=1}^n|a_i|$ satisfies the property of the $c$ from Lemma \ref{general lower bound}. Since the rank of $A$ is 1, the corollary follows. 
\end{proof}

\section{Density of Visible Points on Intersections of Hyperplanes}
\label{sec3}
Although Theorem \ref{snfdensity} ultimately subsumes the result, we begin this section with Lemma \ref{easy hyperplane} which demonstrates the proof technique using the simplest hyperplane: $x_1 = b$ in $\R^n$, where $b \in \Z$.  Theorem \ref{snfdensity} gives the density for arbitrary intersections of hyperplanes and the proof follows from arguments similar to those in Lemma \ref{easy hyperplane}, Lemma \ref{gln preserves}, and the results of Section \ref{sec2}. Several corollaries with simple formulas for the density of visible points follow. All hyperplanes and intersections of hyperplanes are assumed to contain at least one integral point and all defining equations $\vec a\cdot \x = b$ assume $\gcd(\vec a)=1$.

\begin{lemma}\label{easy hyperplane}
    Let $b \in \Z$ and let $\H\subset\R^n$ be the hyperplane $x_1=b$, the density of visible points on $\H$ is
    \[\rho(\H)=\prod_{p|b}1-\frac1{p^{n-1}}.\]
\end{lemma}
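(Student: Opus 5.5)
Since $\H$ is the hyperplane $x_1=b$, I will identify $\H\cap\Z^n$ with $\Z^{n-1}$ via the projection $(b,x_2,\dots,x_n)\mapsto(x_2,\dots,x_n)$. A point $(b,\y)$ is visible exactly when $\gcd(b,y_2,\dots,y_n)=1$, that is, when $\y$ is coprime to $b$ in the sense of Section \ref{sec2}. So I reduce to a coprimality count in $\R^{n-1}$. I first treat $b\neq 0$ and take $r\ge |b|$. Then $\Delta(r)=\{b\}\times[-r,r]^{n-1}$, and under the identification $\Delta_1(r)$ corresponds to the lattice points of the cube $C_r=[-r,r]^{n-1}\subset\R^{n-1}$. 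The visible points $\Delta_0(r)$ correspond to the subset $(C_r)_b$ of lattice points coprime to $b$. For $r<|b|$ the set $\Delta(r)$ is empty, which is harmless because only the limit matters.

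Next I apply Theorem \ref{our bigO} in dimension $n-1$ to the convex, compact, full-dimensional domain $C_r$. It has volume $(2r)^{n-1}$ and diameter $\omega=2r\sqrt{n-1}$, so
\[\left||(C_r)_b|-\Big(\prod_{p|b}1-\frac1{p^{n-1}}\Big)(2r)^{n-1}\right| = O(r^{n-2}).\]
The denominator is counted exactly: $|\Delta_1(r)|=(2\lfloor r\rfloor+1)^{n-1}$. This is also consistent with the $\Omega(r^{n-1})$ lower bound of Corollary \ref{lower bound}. Dividing by $(2\lfloor r\rfloor+1)^{n-1}$, the error becomes $O(1/r)$, and $(2r)^{n-1}/(2\lfloor r\rfloor+1)^{n-1}\to1$. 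Therefore $\rho(\H)=\prod_{p|b}(1-p^{-(n-1)})$.

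The case $b=0$ needs separate treatment. Here every integer divides $b$, the product is interpreted as $1/\zeta(n-1)$ (as in the remark after Theorem \ref{our bigO}), and a visible point on $x_1=0$ is exactly a visible point of $\Z^{n-1}$. I would invoke Theorem \ref{counting density thm} in dimension $n-1$ instead. This is fine for $n-1\ge2$, where the $\omega\log\omega$ error for $n-1=2$ still gives $o(r^{n-1})$. For $n=2$ the density is $0$, since only $(0,\pm1)$ are visible, matching $1/\zeta(1)=0$. The case $b=\pm1$ gives an empty product, so density $1$, which agrees with every point being visible.

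The main obstacle is bookkeeping rather than analysis. I have to justify that Theorem \ref{our bigO} applies in the lower dimension $n-1$ with constants depending only on $n$ and $b$ (the number of divisors of $b$ enters the $O$-constant, but $b$ is fixed). I also have to make sure that the ratio of the volume $(2r)^{n-1}$ to the lattice point count $(2\lfloor r\rfloor+1)^{n-1}$ tends to $1$ for non-integer $r$. Both are routine.
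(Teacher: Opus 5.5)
Your proposal is correct and matches the paper's proof essentially step for step. The paper also disposes of $b=0$ via the classical $1/\zeta(n-1)$ result and of $b=\pm1$ trivially, then applies Theorem~\ref{our bigO} in dimension $n-1$ to $\{b\}\times[-r,r]^{n-1}$, with volume $(2r)^{n-1}$ and error $O(r^{n-2})$, and divides by the exact lattice-point count. Your extra care with non-integer $r$ (using $2\lfloor r\rfloor+1$) and with the degenerate case $n=2$, $b=0$ only tightens details the paper glosses over.
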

\noindent
Note this value is equal to $\displaystyle \frac{J_{n-1}(|b|)}{|b|^{n-1}}$ when $b \ne 0$ and $1/\zeta(n-1)$ when $b=0$, as expected from the discussion in Section \ref{sec1}.
\begin{proof}
        For $b=0$, $\H=\{0\}\times\R^{n-1}$ and this is the classic result that the density of visible points in $\R^{n-1}$ is $1/\zeta(n-1)$, which matches the Euler product given in the lemma. For $b=\pm 1$, the density is 1 because every integer point on the hyperplane is visible and the lemma holds because the product is empty. It is left to show the lemma for $b>1$ since the density will be the same for $\pm b$. 

        Fix $b>1$, set $\Delta(r)=\H\cap[-r,r]^n$,  $\Delta_1(r)\subset \Delta(r)$ the subset of integral points and $\Delta_0(r)$ the subset of visible points. For $r\geq b$, the diameter of $\Delta(r)$ is $\omega_r = 2r\sqrt{n-1}$ so  $O(\omega_r^{n-2}) = O(r^{n-2})$.  Therefore, by Theorem \ref{our bigO},
    \[\left||\Delta_0(r)| - \left(\prod_{p|b}1-\frac{1}{p^{n-1}}\right)\vol(\Delta(r))\right|=\left||\Delta_0(r)| - \left(\prod_{p|b}1-\frac{1}{p^{n-1}}\right)(2r)^{n-1}\right|
    =O(r^{n-2}).
    \]
    Since $|\Delta_1(r)| = (2r+1)^{n-1}$, we see that 
    \[\rho(\H)=\lim_{r \to \infty}\frac{|\Delta_0(r)|}{|\Delta_1(r)|} = \left(\prod_{p|b}1-\frac{1}{p^{n-1}}\right)\lim_{r \to \infty} \frac{(2r)^n}{(2r+1)^n} = \prod_{p|b}1-\frac{1}{p^{n-1}}.
    \]
\end{proof}

To make the calculation of the density of visible points on arbitrary hyperplanes and their intersections easier, we act on them by $\gl_n(\Z)$. Lemma \ref{gln preserves} notes that the action of $\gl_n(\Z)$ preserves visibility of integral points. Though certainly well known, we include a proof here for completeness.

\begin{lemma}
\label{gln preserves}
Let $\x$ be a point in $\Z^n$. Then $\gcd({\x}) = \gcd(A\x)$ for any $A \in \gl_n(\Z)$.  
\end{lemma}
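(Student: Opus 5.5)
The plan is to prove a divisibility statement in both directions: $\gcd(\x)$ divides $\gcd(A\x)$, and $\gcd(A\x)$ divides $\gcd(\x)$. Since both quantities are nonnegative, this forces equality. The only input needed is that $A$ has integer entries and that $A^{-1}$ also has integer entries, which is exactly what membership in $\gl_n(\Z)$ gives us.

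First, set $g=\gcd(\x)$. Every component $x_j$ is divisible by $g$. Each component of $A\x$ has the form $\sum_j a_{ij}x_j$ with $a_{ij}\in\Z$, so it is an integer combination of multiples of $g$ and is itself divisible by $g$. Hence $g\mid\gcd(A\x)$.

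Second, apply the same argument to the vector $\y=A\x\in\Z^n$ and the matrix $A^{-1}\in\gl_n(\Z)$. This gives $\gcd(A\x)\mid\gcd(A^{-1}A\x)=\gcd(\x)$.

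The two divisibilities together give $\gcd(\x)=\gcd(A\x)$. The degenerate case $\x=\vec 0$ needs no separate treatment: then $A\x=\vec 0$, and both gcds are $0$ under the usual convention.

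There is essentially no obstacle here. The one point to state explicitly is why $A^{-1}$ has integer entries: either by the definition of $\gl_n(\Z)$ as the group of units of $M_n(\Z)$, or, if one prefers to start from $\det A=\pm1$, via the adjugate formula $A^{-1}=\pm\,\mathrm{adj}(A)$. It then follows immediately that $A$ maps visible points bijectively onto visible points.
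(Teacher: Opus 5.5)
Your proof is correct, but it takes a different route from the paper. The paper argues through generators. It cites the fact that $\sl_n(\Z)$ is generated by the transvections $T_{ij}$, and checks that each $T_{ij}$ (which replaces $x_i$ by $x_i+x_j$) preserves the gcd. It then handles $\det A=-1$ separately by writing $A=D(DA)$ with $D=\mathrm{diag}(-1,1,\ldots,1)$.

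You use only that $A$ and $A^{-1}$ both have integer entries. So the components of $A\x$ are integer combinations of the components of $\x$, and vice versa. This gives mutual divisibility of the two gcds; equivalently, the components of $\x$ and of $A\x$ generate the same ideal of $\Z$.

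Your version is shorter and self-contained. It avoids the external theorem that elementary matrices generate $\sl_n(\Z)$, which is itself a Euclidean-algorithm argument, and it needs no case split on $\det A$. It also shows slightly more: $\gcd(\x)\mid\gcd(B\x)$ for every integer matrix $B$ of any shape, with equality whenever $B$ has an integral left inverse.

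The paper's route mainly offers a picture that matches its later use of the Smith Normal Form, where unimodular changes of variables are built from elementary operations that each visibly preserve the gcd. Logically, though, it is the heavier of the two arguments.
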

\begin{proof}
    $\sl_n(\Z)$ is generated by transvection matrices $T_{ij}$ for $1\leq i\ne j\leq n$ with 1's on the diagonal and in the $(i,j)$ entry, and 0's elsewhere (\cite{MR1079696}). Moreover, $\gcd(T_{ij}\x)=\gcd(x_1,\ldots,x_{i-1},x_i+x_j,x_{i+1},\ldots,x_n)= \gcd(\x)$. Therefore, for any $A \in \sl_n(\Z)$, $\gcd(A\x) = \gcd(\x)$. If $A \in \gl_n(\Z)$ with $\det(A) = -1$, then $A = D(DA)$ with $D$ the diagonal matrix with entries $[-1,1,\ldots,1]$ and $DA \in \sl_n(\Z)$. Since $D$ has no effect on the $\gcd$, $\gcd(A\x) = \gcd(D(DA\vec X)) = \gcd(DA\x) = \gcd(\x)$.
\end{proof}

Let $\H_1,\ldots,\H_s$ be $s$ hyperplanes in $\mathbb R^n$ with a common integral point $\p = (p_1,\ldots,p_n)\in\Z^n$ and equations $\sum_{j=1}^na_{ij}x_i = \sum_{j=1}^na_{ij}p_i$ for $1\leq i \leq s$. Set $A$ to be the $s\times n$ matrix $A = (a_{ij})$ so that $\x \in \bigcap_{i=1}^k \H_i$ iff $A {\x} = A{\p}$.
 
 By the existence theorem for the Smith Normal Form (e.g., \cite[Thm. 1.11]{norman}) there exist matrices $U \in \gl_s(\Z)$ and $V \in \gl_n(\Z)$ so that $UAV = D$ where $D$ is a $s\times n$ matrix whose off diagonal entries are 0 and whose diagonal entries, $d_{i}$, are the invariant factors of the matrix $A$. That is, $d_{i}\mid d_{i+1}$ and $\prod_{j\leq i} d_{j}=\gcd(i\times i \textrm{ minors of }A)$ (\cite[Def 1.6]{norman}). Since $\gcd(a_{ij})=1$, we have $d_1 = 1$ and also $d_i=0$ for all $i>m$ where $m$ is the rank of $A$.

Since $\x \in \bigcap_{i=1}^s \H_i$ if and only if $A\x = A\p$, using the SNF matrices above, we see that $\x \in \bigcap \H_i$ if and only if 
$DV^{-1}\x = DV^{-1}\p$. Consider the change of variables $\y = V^{-1}\x$ and note that by Lemma \ref{gln preserves}, $\gcd(\x) = \gcd(\y)$ for all integral values of the $x_i$. Set $V^{-1}\p = (p_1',\ldots,p_n')$ so that $V^{-1}:\R^n \to \R^n$ transforms $\bigcap\H_i$ as follows:
\[
V^{-1}\left(\bigcap\H_i\right) =  \{\y\,|\,D\y = DV^{-1}\p\} = \{(p_1',\ldots,p_m',y_{m+1},\ldots, y_n)\,|\, y_i \in \Z\}.
\] 
Since $\gcd((p_1',\ldots,p_m',y_{m+1},\ldots, y_n)) = 1$ if and only if $\gcd(\gcd((p_1',\ldots,p_m'),y_{m+1},\ldots, y_n)=1$, from this formulation, Lemma \ref{easy hyperplane} and Lemma \ref{gln preserves}, it is clear that the density of visible points on the intersection of the hyperplanes should be the formula given in Theorem \ref{snfdensity}. The complication is that when we used the SNF to move the intersection of planes into a standard position, we also changed the standard bounding boxes into parallelotopes. The lower bounds given in Section \ref{sec2} allow us to prove the asymptotic density is not affected by this change.

\begin{theorem}
\label{snfdensity}
Let $A\x = A\p$ be a system of $s$ linear equations in $n$ variables, with Smith Normal Form matrices $D,U,V$ as above. The asymptotic density of visible points on the intersection of the $s$ hyperplanes is 
\[
\prod_{p|\gcd\left(p_1',\ldots,p_m'\right)}\left(1-\frac1{p^{n-m}}\right)\]
where $V^{-1}\p = (p_1',\ldots, p_n')$ and $m = \mathrm{rank}(A)$.
\end{theorem}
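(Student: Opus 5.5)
The plan is to transfer the problem through $V^{-1}$ to the standard position described above, and then apply Theorem \ref{our bigO} to the transformed (skewed) bounding regions. Set $g=\gcd(p_1',\ldots,p_m')$ and let $\H'=V^{-1}(\bigcap\H_i)=\{(p_1',\ldots,p_m')\}\times\R^{n-m}$. By Lemma \ref{gln preserves}, $V^{-1}$ is a bijection between the integral points of $\bigcap\H_i\cap[-r,r]^n$ and the integral points of $\H'\cap V^{-1}([-r,r]^n)$, and it preserves $\gcd$. Write $P(r)$ for the projection of $\H'\cap V^{-1}([-r,r]^n)$ onto the last $n-m$ coordinates. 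Then $P(r)$ is a compact convex polytope in $\R^{n-m}$ and $P(r)=rP(1)+O(1)$. More precisely, it is the slice of the parallelotope $rV^{-1}([-1,1]^n)$ at the fixed first $m$ coordinates. We have $|\Delta_1(r)|=|P(r)\cap\Z^{n-m}|$. A point $\y=(p_1',\ldots,p_m',y_{m+1},\ldots,y_n)$ is visible iff $\gcd(y_{m+1},\ldots,y_n,g)=1$, so $|\Delta_0(r)|=|P(r)_g|$ in the notation of Theorem \ref{our bigO}, with $g$ in place of $b$ and $n-m$ in place of $n$. When $g=0$ (all $p_i'=0$), we read this as the visible-point count and use Theorem \ref{counting density thm} instead.

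The next step is to control the diameter and volume of $P(r)$. Since $V^{-1}$ is a fixed linear map, $\omega_r=\mathrm{diam}\,P(r)\le \|V^{-1}\|\,2r\sqrt n=O(r)$. Theorem \ref{our bigO} then gives $|\Delta_0(r)|=\prod_{p\mid g}(1-p^{-(n-m)})\vol(P(r))+O(r^{n-m-1})$. In the case $g=0$, Theorem \ref{counting density thm} gives the same with $1/\zeta(n-m)$ and an error $O(r\log r)$ when $n-m=2$. Lemma \ref{counting lemma} gives $|\Delta_1(r)|=\vol(P(r))+O(r^{n-m-1})$. Dividing, we get
\[
\frac{|\Delta_0(r)|}{|\Delta_1(r)|}=\frac{\prod_{p\mid g}(1-p^{-(n-m)})\vol(P(r))+o(r^{n-m})}{\vol(P(r))+O(r^{n-m-1})}.
\]
This tends to the claimed product provided $\vol(P(r))\gg r^{n-m}$.

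That lower bound is where Lemma \ref{general lower bound} enters, and I expect it to be the main obstacle. The lemma gives $|\Delta_1(r)|=\Omega(r^{n-m})$. Combined with Lemma \ref{counting lemma}, this yields $\vol(P(r))\ge |\Delta_1(r)|-O(r^{n-m-1})=\Omega(r^{n-m})$. The error terms are then negligible, and the limit equals $\prod_{p\mid g}(1-1/p^{n-m})$.

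Some care is needed with the hypotheses of Theorems \ref{our bigO} and \ref{counting density thm}, which require a full-dimensional convex domain of diameter at least $1$. For $r\ge r_0$ large, $P(r)$ contains a box of side $\asymp r$, by the argument of Lemma \ref{general lower bound} applied in $\y$-coordinates, so these hypotheses hold. The degenerate case $n=m$ is handled separately: there the intersection is a single point, and the density is $1$ or $0$ according to whether $g=\pm1$, which matches the product formula.
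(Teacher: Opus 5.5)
Your proposal is correct and follows essentially the same route as the paper. Both arguments transport by $V^{-1}$, project onto the last $n-m$ coordinates, apply Theorem \ref{our bigO} (or Theorem \ref{counting density thm} when $g=0$) together with Lemma \ref{counting lemma} and the bound $\omega_r=O(r)$, and then use Lemma \ref{general lower bound} to make the error terms negligible against $r^{n-m}$. You also check the full-dimensionality hypothesis, note the $r\log r$ error when $n-m=2$, and treat the degenerate case $n=m$, all of which the paper leaves implicit. The one slip is your aside $P(r)=rP(1)+O(1)$, which is false as written ($P(1)$ can be empty); it should be $r$ times the central slice of $V^{-1}([-1,1]^n)$, plus $O(1)$. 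You never use this claim, so the proof is unaffected.
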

\begin{proof}
    Denote the $s$ hyperplanes by $\H_i$. Let $V^{-1}(r) = V^{-1}([-r,r]^{n})$ be the transformation of the $n$-cube of edge length $2r$ under $V^{-1}$. Let $\Delta(r) = V^{-1}(\bigcap \H_i \bigcap [-r,r]^n) = V^{-1}(\bigcap\H_i)\bigcap V^{-1}(r)$. Using the argument preceding the statement of the theorem, $\Delta(r)$ is the intersection of the $m$ hyperplane sections, $y_1=p_1',\,\ldots,\,y_m=p_m'$, with the $n$-parallelotope $V^{-1}(r)$. Let $\omega_r$ be the diameter of $\Delta(r)$, $\Delta_0(r)$ the visible points and $\Delta_1(r)$ the integral points of $\Delta(r)$. Since the first $m$ entries of any element in $\Delta(r)$ are $p_1',\ldots,p_m'$, we can identify $\Delta(r)$ with its projection onto the last $n-m$ entries. Under this projection $\Delta(r)$ is a convex domain in $\R^{n-m}$ and setting $b = \gcd(p_1',\ldots,p_m')$,  $\Delta_0(r)$ and $\Delta_1(r)$ become $\Delta_b(r)$ and $\Delta_1(r)$, respectively, from Theorem \ref{our bigO}. In particular, for $b \ne 0$, 
    \[\left|\frac{|\Delta_0(r)|}{|\Delta_1(r)|} - \left(\prod_{p|b}1-\frac{1}{p^{n-m}}\right)\frac{\vol(\Delta(r))}{|\Delta_1(r)|}\right| = \frac{O(\omega_r^{n-m-1})}{|\Delta_1(r)|}.
    \]

    $|\Delta_1(r)|$ equals the number of integral points in $[-r,r]^{n}\bigcap_{i=1}^s\H_i$. Therefore, by Lemma \ref{general lower bound}, $|\Delta_1(r)| = \Omega(r^{n-m})$. Let $\omega_1$ be the diameter of $V^{-1}(1)$ so that $r\omega_1$ is the diameter of $V^{-1}(r)$. Then $\omega_r \leq r\omega_1$, showing that $O(\omega_r^{n-m-1}) = O(r^{n-m-1})$. From this it follows that 
    \begin{align}
        \lim_{r \to \infty}\frac{O(\omega_r^{n-m-1})}{|\Delta_1(r)|} = 0
    \end{align}
    and hence the asymptotic density of visible points on $\bigcap \H_i$ is 
    \begin{align}
        \rho(\H)=\lim_{r \to \infty}\frac{|\Delta_0(r)|}{|\Delta_1(r)|} =\prod_{p|b} \left(1-\frac{1}{p^{n-m}}\right)\lim_{r \to \infty}\frac{\vol(\Delta(r))}{|\Delta_1(r)|}=\prod_{p|b} \left(1-\frac{1}{p^{n-m}}\right).
    \end{align}
    The last equality follows because $|\Delta_1(r)|=\Omega(r^{n-m})$ and  $\big||\Delta_1(r)|-\vol(\Delta(r))\big| = O(\omega_r^{n-m-1})$ by Lemma \ref{counting lemma}, therefore $\lim_{r \to \infty}\frac{\vol(\Delta(r))}{|\Delta_1(r)|} = 1$. 
    
    If $b=0$ then the theorem follows using the same argument with \cite[Thm 2.1 and Lemma 2.3]{Counting-Problems} in place of Theorem \ref{our bigO} and Lemma \ref{counting lemma}.
\end{proof}
We can now generalize Lemma \ref{easy hyperplane} to any hyperplane.
\begin{corollary}
    \label{generalhyperplanes}
    Let $\mathcal H$ be a hyperplane in $\R^{n}$ given by $\sum_{i=1}^{n} a_ix_i=b$, where $a_i,\,b \in \Z$ and $\gcd(a_1,\ldots,a_{n}) = 1$. The density of visible points on $\H$ is 
    \[
     \rho(\H)=\prod_{p|b}1-\frac{1}{p^{n-1}} = \frac{J_{n-1}(|b|)}{|b|^{n-1}}.\]
\end{corollary}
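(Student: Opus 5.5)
Corollary \ref{generalhyperplanes} is the case $s=1$ of Theorem \ref{snfdensity}, so I will apply that theorem to the $1\times n$ matrix $A=\vec a=(a_1,\ldots,a_n)$. Here $m=\mathrm{rank}(A)=1$ because $\vec a\neq 0$, so the exponent $n-m$ in Theorem \ref{snfdensity} is $n-1$. It then remains only to identify $\gcd(p_1',\ldots,p_m')=p_1'$ with $\pm b$.

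First, the Smith Normal Form. Since $\gcd(\vec a)=1$, the only invariant factor is $d_1=1$, so $D=(1,0,\ldots,0)$. Moreover $U\in\gl_1(\Z)$ means $U=(\pm1)$. From $UAV=D$ we get $AV=\pm(1,0,\ldots,0)$, that is, $\vec a\cdot V\vec e_1=\pm1$ and $\vec a\cdot V\vec e_j=0$ for $j\ge2$. Concretely, such a $V$ exists because a primitive row vector can be completed to a matrix in $\gl_n(\Z)$.

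Next, compute $p_1'$. Let $\p$ be any integral point on $\H$, so $\vec a\cdot\p=b$; one exists because $\gcd(\vec a)=1$. Put $\y=V^{-1}\p$. Then
\[
b=A\p=AV\y=\pm(1,0,\ldots,0)\,\y=\pm p_1',
\]
so $p_1'=\pm b$. The prime divisors of $\gcd(p_1')=|b|$ are therefore exactly the primes dividing $b$, and Theorem \ref{snfdensity} gives $\rho(\H)=\prod_{p\mid b}\bigl(1-p^{-(n-1)}\bigr)$.

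Finally, for $b\neq0$, the closed formula for the Jordan totient gives $J_{n-1}(|b|)/|b|^{n-1}=\prod_{p\mid b}\bigl(1-p^{-(n-1)}\bigr)$. For $b=0$ the product runs over all primes and equals $1/\zeta(n-1)$, in agreement with the $b=0$ case of Theorem \ref{snfdensity}.

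There is no real obstacle. The one point to check carefully is the sign and unit bookkeeping, namely that $U=\pm1$ and hence $p_1'=\pm b$ rather than some multiple of $b$. This is exactly where the hypothesis $\gcd(\vec a)=1$ is used.
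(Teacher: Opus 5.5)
Your proposal is correct and follows the paper's route. The paper states this corollary as an immediate consequence of Theorem \ref{snfdensity} with $s=m=1$, and your identification $p_1'=\pm b$ via $U=\pm1$ and $D=(1,0,\ldots,0)$ is exactly the $s=1$ case of the computation $\gcd(A\p)=\gcd(UA\p)=\gcd(DV^{-1}\p)$ in the proof of Corollary \ref{dm=1}.
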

\noindent
When all of the invariant factors of $D$ are 1 in Theorem \ref{snfdensity} there is a more palatable formula for the density of the intersection of hyperplanes:
\begin{corollary}
\label{dm=1}
    Let $A\x = A\p$ define a system of $s$ linear equations in $n$ variables as above with Smith Normal Form matrices $D,U,V$ and the additional assumption that $d_m = 1$ where $m=\mathrm{rank}(A)$. Then the asymptotic density of visible points on the intersection of the $s$ hyperplanes is
    \[\prod_{p|\gcd\left(b_1,\ldots,b_s\right)}\left(1-\frac1{p^{n-m}}\right)\]
    where $A\p = (b_1,\ldots,b_s)$.
\end{corollary}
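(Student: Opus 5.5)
Corollary \ref{dm=1} follows from Theorem \ref{snfdensity}. It suffices to show that, when $d_1=\cdots=d_m=1$, we have $\gcd(p_1',\ldots,p_m') = \gcd(b_1,\ldots,b_s)$, where $V^{-1}\p=(p_1',\ldots,p_n')$ and $A\p=(b_1,\ldots,b_s)$. Once this equality is established, the product in Theorem \ref{snfdensity} is literally the product in the corollary, since both run over the primes dividing the same integer. This also covers the case where the common gcd is $0$, with the convention that the product is then $1/\zeta(n-m)$.

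The key computation uses the Smith Normal Form identity $UAV=D$. Apply $U$ to the vector $A\p$:
\[
U(b_1,\ldots,b_s)^T = UA\p = UAV\,V^{-1}\p = D\,V^{-1}\p .
\]
Since $D$ has diagonal entries $d_1=\cdots=d_m=1$ and all other entries $0$, the right-hand side is the vector $(p_1',\ldots,p_m',0,\ldots,0)^T\in\Z^s$. Therefore
\[
\gcd\bigl(U(b_1,\ldots,b_s)^T\bigr)=\gcd(p_1',\ldots,p_m',0,\ldots,0)=\gcd(p_1',\ldots,p_m').
\]
By Lemma \ref{gln preserves}, applied in dimension $s$ with $U\in\gl_s(\Z)$, the left-hand side equals $\gcd(b_1,\ldots,b_s)$. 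This gives the claimed equality.

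To finish, substitute $\gcd(p_1',\ldots,p_m')=\gcd(b_1,\ldots,b_s)$ into the formula of Theorem \ref{snfdensity}. No real obstacle is expected. The only care needed is the convention that appending zeros does not change a gcd. One should also note that Lemma \ref{gln preserves} is stated for $\Z^n$ but its proof works verbatim in $\Z^s$.
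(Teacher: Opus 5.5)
Your proposal is correct and follows the paper's proof: both reduce to showing $\gcd(p_1',\ldots,p_m')=\gcd(b_1,\ldots,b_s)$ via the chain $\gcd(A\p)=\gcd(UA\p)=\gcd(DV^{-1}\p)=\gcd(p_1',\ldots,p_m',0,\ldots,0)$, using Lemma \ref{gln preserves} for $U\in\gl_s(\Z)$. One small point worth stating explicitly is that $d_m=1$ forces $d_1=\cdots=d_m=1$ because of the divisibility chain $d_i\mid d_{i+1}$.
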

\begin{proof}
    By Theorem \ref{snfdensity} it is enough to show that $\gcd(p_1',\ldots,p_m') = \gcd(b_1,\ldots,b_s)$. Since $\gcd(b_1,\ldots,b_s) = \gcd (A\p)$, the result follows because 
    \[\gcd(A\p) = \gcd(UA\p) = \gcd(DV^{-1}\p) = \gcd(p_1',\ldots,p_m',0,\ldots, 0) = \gcd(p_1',\ldots,p_m').\]
\end{proof}

We end this section with a reformulation of the density of visible points on a hyperplane using $n$ integral points in general position in $\R^n$. 

Let $\{\p_i = (p_{i1},\ldots,p_{in})\}_{i=1}^n$, be $n$ points in $\R^n$ in general position. The unique hyperplane containing them has equation 
\[\det
\begin{pmatrix}
    p_{11}&\cdots&p_{nn}&1\\
    \vdots&\cdots&\vdots&1\\
    p_{n1}&\cdots&p_{nn}&1\\
    x_1&\cdots&x_n&1
\end{pmatrix}=0.
\]
The points being in general position means that not all $x_i$ has zero coefficient in this expression. Moreover, the constant term has absolute value the determinant of the matrix with rows $\p_1,\ldots,\p_n$, giving the following formulation:
\begin{corollary}
    \label{det_density}
    Let $\{\p_i = (p_{i1},\ldots,p_{in})\}_{i=1}^n$ be $n$ integral points in general position in $\R^n$. The asymptotic density of visible points on the hyperplane containing the $\p_i$ is 
    \[\prod_{p|d}1-\frac1{p^{n-1}}\]
    where $d$ is the determinant of the matrix with rows $\p_i$.
\end{corollary}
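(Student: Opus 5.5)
The plan is to reduce Corollary \ref{det_density} to Corollary \ref{generalhyperplanes}. Since the $\p_i$ are integral, the hyperplane through them has an integral equation. Expanding the displayed determinant along its last row gives
\[
\sum_{i=1}^n c_i x_i = \pm d ,
\]
where each $c_i \in \Z$ is a signed $n\times n$ minor of the matrix with rows $(\p_j,1)$, and $d$ is the determinant of the matrix with rows $\p_1,\ldots,\p_n$. General position guarantees that not all $c_i$ vanish, so this really is a hyperplane equation $\vec c\cdot\x = \pm d$. The density is insensitive to the sign of $d$, since $\prod_{p\mid d}$ only depends on $|d|$.

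Corollary \ref{generalhyperplanes} requires $\gcd(\vec c)=1$, which need not hold. So I would set $g=\gcd(c_1,\ldots,c_n)$. Because the hyperplane contains the integral point $\p_1$, we have $\pm d = \vec c\cdot \p_1$, hence $g\mid d$. Dividing through gives the primitive equation
\[
(\vec c/g)\cdot \x = \pm d/g ,
\]
and Corollary \ref{generalhyperplanes} yields density $\prod_{p\mid d/g}\bigl(1-p^{-(n-1)}\bigr)$.

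The main obstacle is therefore to show that $d/g$ and $d$ have the same prime divisors; otherwise the stated formula must be read with the primitive constant term. To settle this, I would work in the $(n+1)$-dimensional lattice picture. The vectors $(\p_i,1)$ together with $(\x,1)$ for a lattice point $\x$ on the hyperplane are linearly dependent, and the gcd of the constant terms over integral equations is governed by the index of the lattice spanned by the $\p_i$ inside the integral points of the linear span of $\p_1,\ldots,\p_n$ through the origin.

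First I would test the claim on small examples. Taking $n=2$ with points $(0,2)$ and $(2,0)$ gives $d=-4$, with line $2x+2y=4$, i.e.\ $x+y=2$. The primes dividing $4$ and $2$ agree here, but for $(0,4),(4,0)$ in $\R^2$ the primes also agree, which suggests the reduction only ever removes repeated prime factors. That would be false in general, though: $(1,2),(3,2)$ give $d=-4$ but line $y=2$, with the same primes. I would look for a case where $g$ contains a prime that does not divide $d/g$. If the claim fails, the honest statement uses $d/g$, i.e.\ the determinant divided by the gcd of the coefficients, and I would phrase the corollary accordingly, noting that $d/g$ is $\pm\gcd$ of the constant term in primitive form.

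If instead the paper intends $d$ as literally stated, the remaining step is to justify that every prime dividing $g$ also divides $d/g$. I would attempt this via the index interpretation: $g$ equals the index of the lattice generated by differences $\p_i-\p_1$ in the integral points of the direction space, and $d/g$ is the primitive constant term.
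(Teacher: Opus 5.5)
Your reduction follows the paper's route: expand the $(n+1)\times(n+1)$ determinant along its last row, read off the constant term $\pm d$, and invoke Corollary~\ref{generalhyperplanes}. You are right that this corollary needs $\gcd(\vec c)=1$; the paper's justification ignores that hypothesis. The gap in your proposal is that you stop exactly there, and the step you plan to prove in your last paragraph is false. That step is the claim that every prime dividing $g$ also divides $d/g$, so no index argument can supply it.

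Take $n=2$, $\p_1=(1,1)$, $\p_2=(3,1)$. Then $d=1\cdot 1-1\cdot 3=-2$ and the expansion gives $2y=2$, so $g=2$ and $d/g=-1$. The line is $y=1$, and every lattice point $(x,1)$ on it is visible, so its density is $1$. The stated formula gives $1-\tfrac12=\tfrac12$. The same failure occurs in every dimension. For example, $(0,0,1),(2,0,1),(0,2,1)$ has $d=4$ and spans the plane $4z=4$, so $g=4$; that plane is $z=1$, with density $1$ rather than $1-\tfrac14=\tfrac34$. In your index language, $g$ is the index of the lattice generated by the $\p_i-\p_1$ inside the integer points of the direction space. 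Nothing forces its primes to divide the primitive constant term, so your heuristic that dividing by $g$ only strips repeated prime factors is wrong.

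So the corollary cannot be proved as literally stated. What your first two paragraphs do establish, completely, is the corrected version. Since $\p_1$ lies on $\vec c\cdot\x=\pm d$, we have $g\mid d$. Then $(\vec c/g)\cdot\x=\pm d/g$ is a primitive equation, and Corollary~\ref{generalhyperplanes} gives density $\prod_{p\mid d/g}\bigl(1-p^{-(n-1)}\bigr)$. Here $g=\gcd(c_1,\ldots,c_n)$, which equals the gcd of the maximal minors of the $(n-1)\times n$ matrix with rows $\p_i-\p_1$, $2\le i\le n$.

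This agrees with the stated formula whenever $d$ and $d/g$ have the same prime divisors, in particular when $g=1$ or $d=0$. You should commit to this corrected statement, or add the hypothesis $g=1$, rather than leave the proof conditional on a claim that fails.
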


\section{Density of $k$-free Lattice Points}
\label{k-free sec}
In this section we extend the previous results to compute the density of so-called $k$-free lattice points. This amounts to loosening the sieve methods to detect only the $k$th-power of primes rather than the primes themselves.

For some $k\in\Z_{\geq 0}$, an integer $b$ is said to be {\it $k$-power-free} if there does not exist a prime $p$ such that $p^k|b$. A lattice point $\x\in\Z^n$ is $k$-power-free if $\gcd(\x)$ is $k$-power-free. We say that $\x$ is relatively $k$-power-free to $b$ if $p^k\nmid\gcd(\x,b)$ for all primes $p$. The $k$-power-free integers and lattice points are commonly referred to with the shortened adjective {\it $k$-free}, a convention we follow. These have been widely studied (e.g. \cite{k-free-tsvetkov}, \cite{Baake_2025}, and \cite{Pleasants_2013}) and notably, the asymptotic density of $k$-free lattice points in $\Z^n$ is known to be $1/\zeta(kn)$ (\cite{Pleasants_2013}, Corollary 1). 

In this section we compute the asymptotic density of $k$-free lattice points in hyperplanes and their intersections. Our methods for computing these densities follow those presented earlier, only we now tune our sieves to recognize the $k$th power of primes rather than the primes themselves. When $k=1$, the $k$-free lattice points are the visible lattice points, so the results in this section serve as generalizations to the theorems presented earlier.

We first prove a generalized version of Theorem \ref{our bigO}. The proof is a generalization of the proof of Theorem \ref{our bigO}, once again implementing a standard sieve argument only we now filter over the $k$th powers of primes that divide $b$ rather than the prime divisors. We introduce an ``order $k$'' M\"obius function that measures whether an integer $d$ is a product of $k$th powers of primes. Define 
\[\mu_k(d) =\begin{cases}
       \mu(m) &\textrm{if $d=m^k$}\\
       0 &\textrm{otherwise}
       \end{cases}\]
Note that $\mu_k$ is similar but not equal to Apostol's M\"obius function of order $k$ found in \cite{Apostol:order_k_mob}. Identity \eqref{e5} generalizes to $\mu_k$ as follows. Let $b$ be a non-zero integer and let $b_k$ be the product of all distinct primes $p$ with $p^k\mid b$. Note that for a divisor $d$ of $b$, $\mu_k(d) \ne 0$ if and only if $d=m^k$ and $m|b_k$.  In particular, for any $n\in\N$,
\begin{equation}\label{k-th mobius identity}
\sum_{d|b}\frac{\mu_k(d)}{d^n} = \sum_{m|b_k}\frac{\mu_k(m^k)}{m^{kn}} = \sum_{m|b_k}\frac{\mu(m)}{m^{kn}} = \prod_{p|b_k}1-\frac{1}{p^{kn}} = \prod_{p^k|b}1-\frac{1}{p^{kn}}
\end{equation}
where the second to last equal sign is \eqnref{e5}.

As in Section \ref{sec2}, let $\Delta$ be a convex domain of full dimension in $\R^n$ with diameter $\omega$ and integral points $\Delta_1$. Let $\Delta_b^k\subset\Delta_1$ be the subset of points relatively $k$-free to $b$. For any $d\in\Z^+$, recall that $\Lambda_d$ is the set of points in $\Delta_1$ with each component divisible by $d$. It follows that 
\begin{equation}\label{count mu sqrt k}
        |\Delta_b^k|=\sum_{d|b}\mu_{k}(d)|\Lambda_d|.
    \end{equation}
Using identities \eqref{k-th mobius identity} and \eqref{count mu sqrt k}, the proof of Theorem \ref{k-free} follows directly from that of Theorem \ref{our bigO}.
\begin{theorem}[Generalization of Theorem \ref{our bigO}]\label{k-free} For $b\neq 0$
        \[\left||\Delta_b^k| - \left(\prod_{p^k|b}1-\frac{1}{p^{kn}}\right)\vol(\Delta)\right| =O(\omega^{n-1})
    \]
\end{theorem}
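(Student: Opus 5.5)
The plan is to copy the sieve argument of Theorem \ref{our bigO} line by line, with $\mu$ replaced by $\mu_k$. The two identities stated just before the theorem carry most of the load. Identity \eqref{count mu sqrt k} writes $|\Delta_b^k|$ as a finite signed sum of the counts $|\Lambda_d|$. Identity \eqref{k-th mobius identity} writes the constant $\prod_{p^k\mid b}(1-p^{-kn})$ as $\sum_{d\mid b}\mu_k(d)/d^n$.

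I would first make sure \eqref{count mu sqrt k} is justified. A point $\x\in\Delta_1$ is relatively $k$-free to $b$ exactly when no $p^k$ with $p^k\mid b$ divides $g=\gcd(\x,b)$. The only divisors $d$ of $b$ with $\mu_k(d)\neq 0$ are $d=m^k$ with $m\mid b_k$ squarefree. Moreover $\x\in\Lambda_{m^k}$ iff $m^k\mid\gcd(\x)$, and for $m\mid b$ squarefree this holds iff $m$ divides $g'$, the product of the primes $p\mid b_k$ with $p^k\mid\gcd(\x)$. So $\sum_{d\mid b}\mu_k(d)[\x\in\Lambda_d]=\sum_{m\mid g'}\mu(m)$. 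This equals $1$ if $g'=1$ and $0$ otherwise, which gives \eqref{count mu sqrt k}.

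Next, for each $d\mid b$ with $\mu_k(d)\neq0$, I would count $\Lambda_d$ by rescaling as in the proof of Theorem \ref{our bigO}. We have $|\Lambda_d|=|(\Delta/d)_1|$, and $\Delta/d$ is a full-dimensional convex domain of diameter $\omega/d$ and volume $\vol(\Delta)/d^n$. Lemma \ref{counting lemma} then gives $\big||\Lambda_d|-\vol(\Delta)/d^n\big|=O((\omega/d)^{n-1})=O(\omega^{n-1})$. There is a small caveat: Lemma \ref{counting lemma} is stated for diameter $\ge 1$, and $\omega/d$ may be smaller. In that case $\Delta/d$ contains a bounded number of lattice points and has bounded volume, so the difference is $O(1)=O(\omega^{n-1})$ since $\omega\ge1$. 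The proof of Theorem \ref{our bigO} passes over the same point.

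Finally I would combine these using the triangle inequality:
\[
\left||\Delta_b^k|-\Big(\prod_{p^k\mid b}1-\frac1{p^{kn}}\Big)\vol(\Delta)\right|=\left|\sum_{d\mid b}\mu_k(d)\Big(|\Lambda_d|-\frac{\vol(\Delta)}{d^n}\Big)\right|\le\sum_{d\mid b}|\mu_k(d)|\,O(\omega^{n-1}).
\]
The right-hand side is $O(\omega^{n-1})$, because the number of terms is at most $2^{\omega(b_k)}$, which is fixed once $b$ is fixed. As in Theorem \ref{our bigO}, the implied constant depends on $b$ as well as $n$. That is harmless for the applications, where $b$ is fixed and only the domain grows.

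The only step that is more than bookkeeping is verifying \eqref{count mu sqrt k}, that the $\mu_k$-weighted sieve counts exactly the relatively $k$-free points. I would handle it by the pointwise reduction to $\sum_{m\mid g'}\mu(m)$ described above.
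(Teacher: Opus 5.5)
Your proposal is correct and matches the paper's argument. The paper proves Theorem~\ref{k-free} by substituting \eqref{k-th mobius identity} and \eqref{count mu sqrt k} for \eqref{e5} and \eqref{e4} in the chain \eqref{e67}, which is the same rescaling-plus-triangle-inequality step you describe, while your pointwise check of \eqref{count mu sqrt k} and your remark on rescaled domains $\Delta/d$ of diameter below $1$ supply details the paper leaves implicit.
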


Note that if $b=0$ then $\Delta_b^k=\Delta_0^k$ is the subset of $k$-free points in $\Delta$ and the product in the statement of Theorem $\ref{k-free}$ is infinite and equal to $1/\zeta(kn)$.

\begin{proof}
Substituting \eqref{k-th mobius identity} and \eqref{count mu sqrt k} in place of \eqnref{e5} and \eqnref{e4} in \eqnref{e67} completes the proof. 
\end{proof} 

We can now show that Theorem \ref{snfdensity} generalizes to a formula for the density of $k$-free lattice points on general hyperplanes and their intersections as expected. 

\begin{theorem}[Generalization of Theorem \ref{snfdensity}]\label{k-free density}
Let $A\x = A\p$ define a system of $s$ linear equations in $n$ variables with Smith Normal Form matrices $D,U,V$. The asymptotic density of $k$-free lattice points on the intersection of the $s$ hyperplanes is 
\[\prod_{p^k|\gcd\left(p_1',\ldots,p_m'\right)}\left(1-\frac1{p^{k(n-m)}}\right)\]
where $V^{-1}\vec p=(p'_1,...,p_n')$ and $m=\textrm{rank}(A)$.
\end{theorem}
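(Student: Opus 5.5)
The plan is to repeat the proof of Theorem \ref{snfdensity} verbatim, replacing Theorem \ref{our bigO} by Theorem \ref{k-free} and the notion ``visible'' by ``$k$-free''.

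First, the change of variables $\y = V^{-1}\x$ preserves the property. By Lemma \ref{gln preserves}, $\gcd(\x)=\gcd(V^{-1}\x)$ for integral $\x$, so $\x$ is $k$-free exactly when $V^{-1}\x$ is. As in the discussion preceding Theorem \ref{snfdensity}, $V^{-1}$ maps $\bigcap\H_i$ onto
\[
\{(p_1',\ldots,p_m',y_{m+1},\ldots,y_n)\}.
\]
Set $b=\gcd(p_1',\ldots,p_m')$. A point $(p_1',\ldots,p_m',y_{m+1},\ldots,y_n)$ has gcd equal to $\gcd(b,y_{m+1},\ldots,y_n)$. Hence it is $k$-free if and only if $(y_{m+1},\ldots,y_n)$ is relatively $k$-free to $b$. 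For $b=0$ this condition just means $(y_{m+1},\ldots,y_n)$ is $k$-free.

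Next, let $\Delta(r)=V^{-1}(\bigcap\H_i\cap[-r,r]^n)$ and identify it with its projection onto the last $n-m$ coordinates. This projection is a compact convex domain of full dimension in $\R^{n-m}$, with diameter $\omega_r\le r\omega_1$. Its $k$-free points correspond to $\Delta^k_b(r)$ in the notation of Theorem \ref{k-free}. For $b\neq0$, Theorem \ref{k-free} in dimension $n-m$ gives
\[
\Big||\Delta_b^k(r)|-\Big(\prod_{p^k|b}1-\tfrac{1}{p^{k(n-m)}}\Big)\vol(\Delta(r))\Big|=O(r^{n-m-1}).
\]
Lemma \ref{counting lemma} gives $\big||\Delta_1(r)|-\vol(\Delta(r))\big|=O(r^{n-m-1})$. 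Lemma \ref{general lower bound} gives $|\Delta_1(r)|=\Omega(r^{n-m})$. Dividing by $|\Delta_1(r)|$, the error terms tend to $0$ and $\vol(\Delta(r))/|\Delta_1(r)|\to1$, which yields the stated product.

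The case $b=0$ is the only real obstacle, because Theorem \ref{k-free} is stated only for $b\ne0$. Here I would instead cite the known density $1/\zeta(k(n-m))$ of $k$-free points, in the uniform-error form from \cite{Pleasants_2013}. Alternatively, I would adapt the sieve $\sum_{d\ge1}\mu_k(d)|\Lambda_d|$ over convex domains in the manner of \cite[Thm 2.1]{Counting-Problems}. This gives an error of size $O(\omega_r^{n-m-1})$, or $O(\omega_r\log\omega_r)$ when $k(n-m)=2$. In either case the error is $o(r^{n-m})$, which suffices to conclude as above.

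Two degenerate situations need a check. One is $n-m=0$, where the set is a single point and the formula should be read accordingly. The other is $n-m=1$ with $k=1$, where the infinite product diverges; this is consistent with the visible case of Theorem \ref{snfdensity}. I would state the convention that the result is intended for $k(n-m)\ge2$ when $b=0$.
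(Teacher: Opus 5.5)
Your proposal is correct and takes the paper's route. It uses the SNF change of variables, projects onto the last $n-m$ coordinates, applies Theorem~\ref{k-free} in dimension $n-m$, and uses Lemmas~\ref{counting lemma} and~\ref{general lower bound} to get $\vol(\Delta(r))/|\Delta_1(r)|\to 1$ and kill the error term.

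Your separate treatment of $b=0$ goes beyond the paper, whose proof only writes out $b\neq 0$. One correction there: when $n-m=1$ and $k\ge 2$, the infinite sieve gives error $O(\omega_r^{1/k})$. Your formula instead gives $O(1)$ for $k\ge 3$, which is false, and $O(\omega_r\log\omega_r)$ for $k=2$, which would not be $o(r)$. Since $O(\omega_r^{1/k})=o(r)$, your conclusion still stands.
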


\begin{proof}
    Let $V^{-1}(\bigcap\H_i) = \{(p_1',\ldots,p_m',y_{m+1},\ldots,y_n)|y_i \in \Z\}$, $V^{-1}(r) = V^{-1}([-r,r]^{n})$ and $\Delta(r) = V^{-1}(\bigcap\H_i)\cap V^{-1}(r)$ as in Theorem \ref{snfdensity}. Let $\Delta_0^k(r)\subset \Delta_1(r)$ denote the $k$-free points of $\Delta(r)$. Set $b = \gcd(p_1',\ldots,p_m')$. As in the proof of \ref{snfdensity}, projecting $\Delta(r)$ onto the last $n-m$ entries identifies $\Delta(r)$ as a convex domain in $\R^{n-m}$ and $\Delta_0^k(r)$ and $\Delta_1(r)$ with the $\Delta_b^k(r)$ and $\Delta_1(r)$ found in Theorem \ref{k-free}, respectively. In particular, for $b \ne 0$, 
    \[\left|\frac{|\Delta_0^k(r)|}{|\Delta_1(r)|} - \left(\prod_{p^k|b}1-\frac{1}{p^{k(n-m)}}\right)\frac{\vol(\Delta(r))}{|\Delta_1(r)|}\right| = \frac{O(\omega_r^{n-m-1})}{|\Delta_1(r)|}.
    \]
    where $\omega_r$ is the diameter of $\Delta(r)$. The proof now proceeds exactly as in Theorem \ref{snfdensity} to show $\lim_{r \to \infty}O(\omega_r^{n-m-1})/|\Delta_1(r)| = 0$ and $\lim_{r \to \infty}\vol(\Delta(r))/|\Delta_1(r)| = 1$. Hence the asymptotic density of $k$-free points on $\bigcap\H_i$ is 
    \[\lim_{r\to\infty}\frac{|\Delta_0^k(r)|}{|\Delta_1(r)|} = \prod_{p^k|b}1-\frac{1}{p^{k(n-m)}}.\]
\end{proof}

Applying this theorem to the case $m=1$ gives
\begin{corollary}
    \label{k-free general hyperplane}
    Let $\mathcal H$ be a hyperplane in $\R^{n}$ given by $\sum_{i=1}^{n} a_ix_i=b$, where $a_i,\,b \in \Z$ and $\gcd(a_1,\ldots,a_{n}) = 1$. The density of $k$-free points on $\H$ is 
    \[\prod_{p^k|b}1-\frac{1}{p^{k(n-1)}}.\]
\end{corollary}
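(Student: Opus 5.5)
Corollary \ref{k-free general hyperplane} is the case $s=1$ of Theorem \ref{k-free density}. The plan is to apply that theorem to the single equation $\vec a\cdot\x = b$ and then show that the quantity $\gcd(p_1',\ldots,p_m')$ appearing there equals $|b|$.

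First, the coefficient matrix is the $1\times n$ row $A=\vec a^{T}$. Since $\gcd(\vec a)=1$, $A$ is nonzero, so $m=\mathrm{rank}(A)=1$. Its only invariant factor is $d_1=\gcd(a_1,\ldots,a_n)=1$. Next, choose any integral point $\p\in\H$, which exists by the standing assumption; equivalently, it exists because $\gcd(\vec a)=1$ divides $b$. Then the equation of $\H$ reads $A\x=A\p$ with $A\p=b$. Theorem \ref{k-free density} now gives the density
\[\prod_{p^k\mid p_1'}\left(1-\frac{1}{p^{k(n-1)}}\right),\]
where $p_1'$ is the first coordinate of $V^{-1}\p$.

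It remains to identify $p_1'$ with $\pm b$. Exactly as in the proof of Corollary \ref{dm=1}, the Smith Normal Form gives $UAV=D=(1,0,\ldots,0)$, with $U=(\pm1)\in\gl_1(\Z)$. Therefore
\[
\pm b = UA\p = DV^{-1}\p = p_1'.
\]
The condition $p^k\mid p_1'$ is then equivalent to $p^k\mid b$, which yields the stated product.

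The only point needing care is the case $b=0$. Then $p_1'=0$, the product runs over all primes, and it equals $1/\zeta(k(n-1))$. Theorem \ref{k-free density} was proved only for $b\neq 0$. For $b=0$ I would invoke the $b=0$ remark after Theorem \ref{k-free}, together with the $k$-free analogue of \cite[Thm 2.1]{Counting-Problems} (e.g. \cite{Pleasants_2013}), in place of Theorem \ref{k-free}. The error term $O(\omega_r^{n-2})$, or an $\omega\log\omega$-type error term, is still $o(r^{n-1})$. Combined with the lower bound $|\Delta_1(r)|=\Omega(r^{n-1})$ from Corollary \ref{lower bound}, this gives the limit.

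Apart from this, the argument is routine bookkeeping, so I expect no real obstacle.
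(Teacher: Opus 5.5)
Your proposal is correct and follows the paper's approach: the paper's proof simply specializes Theorem \ref{k-free density} to $m=1$, and your identification $p_1'=\pm b$ via $d_1=1$ (as in Corollary \ref{dm=1}) is exactly the step the paper leaves implicit. Your separate treatment of $b=0$ is a reasonable patch, since the paper's proof of Theorem \ref{k-free density} only writes out the $b\neq 0$ case.
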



\section{Structure of Density Sets}\label{sec:ex}
We conclude with an investigation into the structure of the set of possible densities of visible points on hyperplanes for a fixed dimension. For each $n\geq 1$, define the density set to be
\[D_n=\bigg\{\prod_{p|b}1-\frac{1}{p^n}:b\in\Z\bigg\}\subset\left[\frac{1}{\zeta(n)},1\right].\]
$D_n$ is also the range of the Jordan totient quotient ${J_n(b)}/{b^n}$, which 
by Corollary \ref{generalhyperplanes} is the set of possible densities of visible lattice points on a hyperplane in $\R^{n+1}$. Note that a density of ${1}/{\zeta(n)}$ is achieved by hyperplanes of the form $\vec a\cdot\vec x=0$ and a density of $1$ is achieved by hyperplanes of the form $\vec a\cdot \x=\pm 1$. We are interested in $D_n$ and its closure, $\overline D_n$, in the interval $\big[{1}/{\zeta(n)},1\big]$.

For $n=1$, $\big[{1}/{\zeta(n)},1\big] = [0,1]$. The densities of lines in $\R^2$ are of the form $\prod_{p|b}1-p^{-1}=\frac{\varphi(b)}{b}$. Not every rational density will occur because when in lowest terms these rational numbers will all have square free denominators, so there is no line with density $1/4$, say. However, Schinzel and Sierpi\'nski (\cite{Schinzel}, see also \cite[pg. 38]{Ribenboim}) proved that the set $\left\{{\varphi(b)}/{b}\,|\, b \in \N\right\}$ is dense in $(0,1)$, so $\overline D_1 = [0,1]$ and therefore, for any $\alpha \in [0,1]$ we can find a line with asymptotic density as close as we like to $\alpha$. 

\begin{ex}
\label{ex1}
    Using the greedy algorithm, the first three primes used in the approximation of $\frac1\pi= .3183\ldots$ are $2, 3, 23$, giving the three approximations $(1-\frac12) = \frac12, \frac12(1-\frac13) = \frac13$, $\frac13(1-\frac1{23}) = \frac{22}{69} = .3188\ldots$. Set $b_1=2$, $b_2 = 2\cdot 3$ and $b_3 = 2\cdot 3\cdot 23$. By Corollary \ref{generalhyperplanes}  equations $(b_i-1)x+b_iy = b_i$, plotted in Figure \ref{fig:lines approx}, have visible point densities equal to the three approximations above, respectively. The coefficient $b_i-1$ of $x$ is used so that the $\gcd$ of the $x$ and $y$ coefficients is 1. Taking the limit as the number of primes used goes to infinity shows that these  lines, whose visible point densities approach $\frac1\pi$ as the number of primes increases, themselves approach $y=1-x$. This limit line is not unique to $\frac1\pi$ as any $\alpha \in (0,1)$ with infinite product $\alpha = \prod 1-1/p$ can be expressed in the same way.
\begin{figure}
    \centering
    \includegraphics[width=.4\linewidth]{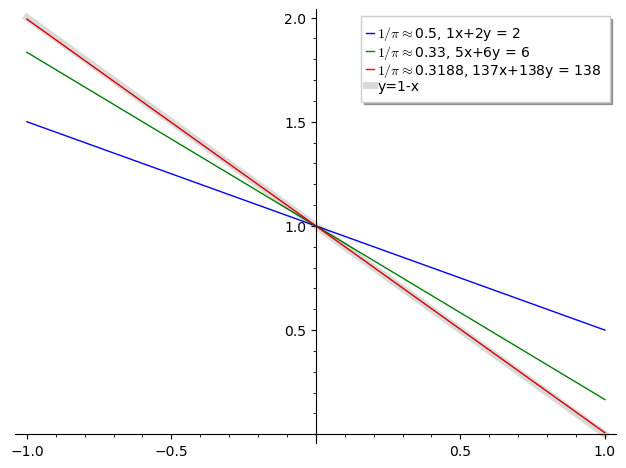}
    \caption{Three lines whose visible point densities successively approximate $\frac1\pi$ from Example \ref{ex1}.}
    \label{fig:lines approx}
\end{figure}
\end{ex}

In the remainder of this section we show that $D_n$ is not dense in $\big[{1}/{\zeta(n)},1\big]$ for $n>1$, so the result of Schinzel and Sierpi\'nski that $\overline D_1=[0,1]$ does not generalize to higher dimensions (Theorem \ref{achievement set}). In Theorem \ref{finite union of intervals} we show that $\overline D_n$ is a finite union of compact intervals for all $n\geq 1$ and then in Lemma \ref{lem:5bound} find $\overline D_2$ precisely. 

\begin{theorem}\label{achievement set}
    For $n>1$, $1-\frac{1}{2^n}<\prod_{p\ne2}1-\frac{1}{p^n}$ and
    \[D_n \cap (1-\frac{1}{2^n},\prod_{p\ne2}1-\frac{1}{p^n}) = \emptyset.\]
    In particular, $D_n$ is not dense in the interval $[1/\zeta(n),1]$
\end{theorem}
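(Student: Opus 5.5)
The plan is to split according to whether $2\mid b$, and to bound the product $f(b)=\prod_{p\mid b}(1-p^{-n})$ separately in each case.

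First the inequality $1-2^{-n}<\prod_{p\ne 2}(1-p^{-n})$. Take logarithms, or use $\prod(1-x_i)\ge 1-\sum x_i$ for $0\le x_i\le 1$. It then suffices to show
\[\sum_{p\ge 3}p^{-n}<2^{-n}\]
with strict inequality. Bound the left side by the sum over odd integers $k\ge 3$, which gives $\sum_{k\ge3,\,k\text{ odd}} k^{-n}$. For $n\ge 2$, compare this with $2^{-n}$ by grouping the odd integers in dyadic blocks: the odd integers in $[2^j,2^{j+1})$ number $2^{j-1}$, and each is at least $2^j$. This gives
\[\sum_{k\ge3,\,k\text{ odd}} k^{-n}\le \Big(3^{-n}+\sum_{j\ge2}2^{j-1}2^{-jn}\Big).\]
It then remains to check that this is strictly less than $2^{-n}$. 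For $n=2$ I expect this crude estimate to fail: $1/9+\sum_{j\ge2}2^{-j-1}=1/9+1/4>1/4$. So for $n=2$ I would instead use the known numerical value $\sum_p p^{-2}\approx 0.4522$, which gives $\sum_{p\ge3}p^{-2}\approx 0.2022<0.25$. For $n\ge 3$ the termwise ratio $\sum_{p\ge 3}(2/p)^n$ is decreasing in $n$, so the case $n=2$ implies all larger $n$. That monotonicity remark is cleaner and avoids the dyadic estimate entirely. This numerical step for $n=2$ is the only real obstacle. To make it rigorous I would bound
\[\sum_{p\ge3}p^{-2}\le \tfrac19+\tfrac1{25}+\tfrac1{49}+\sum_{k\ge 11,\,k\text{ odd}}k^{-2}.\]
The tail is at most $\int_{9}^\infty \tfrac{dx}{2x^2}\cdot$(odd spacing) $=1/18$, and the total is about $0.228<0.25$.

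Next, the gap. Fix $b$. If $2\mid b$, then every factor of $f(b)$ is at most $1$, so $f(b)\le 1-2^{-n}$. If $2\nmid b$ (including $b=\pm1$, where $f=1$), then $f(b)$ is a product over a finite subset of the odd primes. Every omitted factor is $\le 1$, so $f(b)\ge\prod_{p\ne2}(1-p^{-n})$. For $b=0$ the product runs over all primes, giving $1/\zeta(n)\le 1-2^{-n}$; that case falls into the first bound. Hence no value of $f$ lies in the open interval, so $D_n$ misses it.

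Finally, the first part shows this interval is nonempty, and it sits inside $[1/\zeta(n),1]$. Therefore $D_n$ is not dense there.
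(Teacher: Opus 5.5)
Your proof is correct. The gap part (split on whether $2\mid b$, with $b=0$ falling into the even case) is exactly the paper's argument, but you prove the inequality $1-2^{-n}<\prod_{p\ne2}(1-p^{-n})$ by a different route.

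The paper multiplies through by $1-2^{-n}$ and uses the Euler product to turn the claim into the exact reformulation $(1-2^{-n})^2<1/\zeta(n)$, i.e.\ $\zeta(n)<(1-2^{-n})^{-2}=\sum_{k\ge1}k\,2^{-n(k-1)}$. It then bounds the tail $\sum_{k\ge3}k^{-n}$ by $\frac{1}{(n-1)2^{n-1}}$ via an integral. That bound is at most $2^{-n}$ for $n\ge3$, and $n=2$ is settled by the check $\tfrac12<\tfrac{19}{36}$.

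You instead use $\prod(1-x_i)\ge 1-\sum x_i$ to reduce to the stronger prime-sum statement $\sum_{p\ge3}p^{-n}<2^{-n}$. Since $\sum_{p\ge3}(2/p)^n$ is decreasing in $n$, this collapses every $n$ to the single case $n=2$. You then verify $\sum_{p\ge3}p^{-2}\le\frac19+\frac1{25}+\frac1{49}+\frac1{18}\approx0.227<\frac14$. The tail estimate $\sum_{k\ge11,\,k\text{ odd}}k^{-2}\le\frac12\int_9^\infty x^{-2}\,dx=\frac1{18}$ is valid because $k^{-2}\le\frac12\int_{k-2}^{k}x^{-2}\,dx$ for decreasing $x^{-2}$; you should state that justification rather than the garbled ``odd spacing'' phrase.

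What each route buys: yours avoids $\zeta$ and the Euler product entirely, and the monotonicity trick is a clean way to need only one numerical check. Its first step (the Weierstrass inequality) is lossy, though there is enough slack. The paper's reduction loses nothing and needs no estimate on a sum over primes. The dyadic-block estimate you tried first is superseded by the monotonicity argument and can be deleted.
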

\begin{proof}
    Since elements of $D_n$ look like $ \prod_{p|b}1-1/p^n$, the fact that the intersection of the two sets is empty is easy to see; if $2|b$ then $ \prod_{p|b}1-1/p^n\leq 1-1/2^n$ and if $2 \nmid b$ then $ \prod_{p|b}1-1/p^n$ is a subproduct of $\prod_{p\ne 2}1-1/p^n$ and hence greater than it.

    To show the inequality, multiply both sides by $1-1/2^n$ to see that it is enough to show $(1-1/2^n)^2<1/\zeta(n)$, or equivalently $\zeta(n)<(1-1/2^n)^{-2}$. Expressed as infinite series the inequality becomes 
    \[\sum_{k=1}^\infty \frac{1}{k^n} = 1+\frac{1}{2^n}+\frac{1}{3^n}+\cdots <1+\frac{2}{2^n}+\frac{3}{2^{n2}}+\frac{4}{2^{n3}}+\cdots = \sum_{k=1}^\infty \frac{k}{2^{n(k-1)}}\]
    In particular, it is enough to show the following bound on the tail of $\zeta(n)$:
    \begin{equation}
        \sum_{k\geq 3}\frac1{k^n} < \frac1{2^n}+\sum_{k\geq 3}\frac{k}{2^{n(k-1)}}.\label{e89}
    \end{equation} This is true for all $n\geq 2$ by the upper bound given by the integral: 
    \begin{equation}\sum_{k\geq 3}\frac{1}{k^n}<\int_2^\infty \frac1{x^n}\,dx = \frac1{n-1}\frac{1}{2^{n-1}}\label{e88}
    \end{equation}
    For $n\geq 3$ the right hand side of \eqnref{e88} is at most $1/2^n$ and hence the bound \eqnref{e89} is satisfied. When $n=2$ the bound in \eqnref{e88} is 1/2 whereas the right hand side of \eqnref{e89} is 19/36.
\end{proof}

Theorem \ref{achievement set} shows that for $n>1$, $D_n$ has at least one gap, so $\overline D_n \ne \left[1/\zeta(n),1\right]$. In Theorem \ref{finite union of intervals} we show that for every $n\geq 2$, $\overline D_n$ is a finite union of closed intervals. In particular, $\overline D_n$ is not homeomorphic to a cantor set or a cantorval, which are the other possibilities as described in \cite{Achievement_sets}. 

Logarithms translate the problem of understanding $D_n$ into understanding subsums of the series $\sum_p a_p$, where $a_p=-\log(1-p^{-n})$. Foundational results on subsums of series, also called achievement sets in the literature, can be found in \cite{Achievement_sets_3}, \cite{Achievement_sets_2}, \cite{kakeya1914partial} and \cite{Achievement_sets}. For a convergent series of positive real numbers, $\sum_{n\geq 1}a_n$, the achievement set is defined as $A(a_n)=\{\sum_{n\in S}a_n: S\subset\N\}$. We can understand the structure of $A(a_p)$ with the following theorem.

\begin{theorem}[Theorem 1.1 of \cite{Achievement_sets}]\label{finite union lemma}
    If $\sum_{n=1}^\infty a_n$ is absolutely convergent with infinitely many nonzero terms and $(a_n)$ is non-increasing, then $A(a_n)$ is a finite union of compact intervals if and only if $a_k\leq \sum_{n> k}a_n$ for all but finitely many $k$. 
\end{theorem}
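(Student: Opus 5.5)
The plan is to prove the two directions separately. Write $r_k=\sum_{n>k}a_n$ for the tail sums and $M=\sum_{n\geq 1}a_n$ for the total. I will use freely that $(a_n)$ is non-increasing, that $a_n\to 0$, and that $r_k>0$ for every $k$, since infinitely many terms are nonzero.

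\emph{Sufficiency.} Suppose $a_k\leq r_k$ for all $k\geq N$. I would first prove a Kakeya-type claim: the tail achievement set $A_N=A\big((a_n)_{n>N}\big)$ is the full interval $[0,r_N]$. Given $x\in[0,r_N]$, run the greedy algorithm on the indices $n>N$ in increasing order, including $a_n$ exactly when the running sum plus $a_n$ stays $\leq x$. Let $s$ be the resulting sum, so $s\leq x$.
\begin{itemize}
\item If infinitely many indices are rejected, then at each rejected index $k$ the deficit $x-s$ is less than $a_k$. Since $a_k\to 0$, this gives $s=x$.
\item If no index is rejected, then $s=r_N\geq x$, so $s=x$.
\item Otherwise there is a last rejected index $k$. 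Let $s_k$ be the running sum before step $k$, so $s_k+a_k>x$. Every later term is accepted, so $s=s_k+r_k\geq s_k+a_k>x$, which contradicts $s\leq x$.
\end{itemize}
Hence $A_N=[0,r_N]$. It follows that $A(a_n)=F+[0,r_N]$, where $F$ is the finite set of subsums of $a_1,\ldots,a_N$. This is a finite union of compact intervals.

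\emph{Necessity.} Suppose $a_k>r_k$ for infinitely many $k$; I will produce infinitely many gaps. Fix such a $k$. The point $M-a_k$ lies in $A(a_n)$, being the sum of all terms except $a_k$. Now take any element $M-\sum_{j\in T}a_j$ of $A(a_n)$ that exceeds $M-a_k$. Then $\sum_{j\in T}a_j<a_k$. By monotonicity, $T$ contains no index $j\leq k$, since $a_j\geq a_k$ for such $j$. So $T\subset\{n>k\}$ and the element is at least $M-r_k$. Therefore the open interval $(M-a_k,M-r_k)$, which is nonempty, misses $A(a_n)$. Consequently $M-a_k$ is the right endpoint of a connected component of $A(a_n)$.

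Since $0<r_k<a_k\to 0$ along infinitely many $k$, the values $M-a_k$ take infinitely many distinct values. So $A(a_n)$ has infinitely many components with distinct right endpoints, and it cannot be a finite union of compact intervals.

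The step needing the most care is the greedy-algorithm case analysis in the sufficiency direction, specifically the case of a last rejected index. The crux of necessity is the observation that sets $T$ with small sum must lie beyond index $k$. Once both are in place, the rest is bookkeeping.
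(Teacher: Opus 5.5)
The paper gives no proof of this statement. It imports it as Theorem~1.1 of \cite{Achievement_sets}, so your argument supplies a proof rather than competing with one, and it is correct and complete. The only thing you leave tacit is that a non-increasing sequence with $a_n\to 0$ and infinitely many nonzero terms has $a_n>0$ for every $n$. Two of your steps rely on this nonnegativity: the greedy inequality $s\ge s_k$, and the claim that any $T$ meeting $\{1,\dots,k\}$ has $\sum_{j\in T}a_j\ge a_k$.

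Your two halves are the general forms of arguments the paper carries out only in special cases.
\begin{itemize}
\item Your sufficiency half applies Kakeya's theorem \cite{kakeya1914partial} to the tail, then translates $[0,r_N]$ by the finitely many subsums of $a_1,\dots,a_N$. This is exactly the mechanism of Theorem~\ref{D2 intervals}, where $[0,\sum_{q\ge 5}a_q]$ is shifted by $0$, $a_2$, $a_3$ and $a_2+a_3$.
\item Your necessity half, the gap $(M-a_k,M-r_k)$, specializes for $k=1$ (the prime $2$) to Theorem~\ref{achievement set}. Under $x\mapsto e^{-x}$ the endpoints $M-a_1$ and $M-r_1$ become $\prod_{p\ne 2}(1-p^{-n})$ and $1-2^{-n}$. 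The paper's case split on whether $2\mid b$ is your dichotomy on whether $T$ meets $\{1,\dots,k\}$.
\end{itemize}

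What your version buys is self-containment, plus the explicit description $A(a_n)=F+[0,r_N]$ as a union of at most $2^N$ intervals. The paper only ever uses the sufficiency direction, in Theorem~\ref{finite union of intervals}. So your first half alone would make that part of the paper independent of the citation.
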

Since $-\log D_n:=\big\{-\log x:x \in D_n\big\}=\big\{\sum_{p|b}-\log(1-p^{-n}):b\in\Z\big\} = \big\{\sum_{p|b}a_p:b \in \Z\big\}$, $-\log D_n$ is the subset of finite subsums of $A(a_p)$ together with $\log(\zeta(n))$, and its closure is thus $A(a_p)$. In particular, $\overline{D_n}$ is a finite union of compact intervals if and only if $A(a_p)$ is. 
Applying Theorem \ref{finite union lemma} to $A(a_p)$ we have that showing $\overline{D_n}$ is a finite union of compact intervals reduces to showing that for each $n$ there is a prime $p_n$ so that $a_p\leq \sum_{q>p}a_q$ for all $p\geq p_n$. 

For $|x|<1$ the following inequalities hold by considering the Taylor expansions:
\[x\leq -\log(1-x)\leq\frac{x}{1-x}.\]
In particular, these hold for $x={1}/{p^n}$ for $n>1$ and any prime $p$. Thus it suffices to show that there exists a prime $p_n$ such that for all primes $p\geq p_n$ we have 
    \begin{align}\frac{1}{p^n-1}\leq \sum_{q>p}\frac{1}{q^n}.\label{to_show}
    \end{align}
Using Abel's identity (\cite[Thm. 4.2]{Apostol}) the tail of the series can be expressed as 
\[ \sum_{q>p}\frac{1}{q^n} =n\int_p^\infty \frac{\pi(x)}{x^{n+1}}\,dx - \frac{\pi(p)}{p^n}.\]
The inequality $\frac{x}{\log x} < \pi(x)$, from \cite[Cor.2]{Rosser-Schoenfeld} for $x \geq 17$,
 gives for $p\geq 17$,

\begin{align*}
\sum_{q>p}\frac{1}{q^n}
&\geq n\int_p^\infty\frac{1}{x^n\log x}dx-\frac{\pi(p)}{p^{n}}\\
&=nE_1(\log p^{n-1}) -\frac{\pi(p)}{p^{n}}\nonumber
\end{align*}
where $E_1(z)=\int_z^\infty \frac{e^{-x}}{x}dx$ is the exponential integral function. The inequality $E_1(z)\geq \frac{1}{e^{z}(z+1)}$ (\cite{hopf1934mathematical} see also \cite[pg.229, 5.1.19]{Handbook}) decreases the lower bound:
\begin{equation}
    \sum_{q>p}\frac{1}{q^n}\geq \frac{n}{p^{n-1}(\log p^{n-1} + 1)} -\frac{\pi(p)}{p^{n}}\label{Hopf2}
\end{equation}
In Theorem \ref{finite union of intervals} and Lemma \ref{lem:5bound} we start with \eqnref{Hopf2} and then use different upper bounds on $\pi(x)$ to show $a_p\leq_{q>p}a_q$.
\begin{theorem}\label{finite union of intervals}
    For $n\geq 2$, $\overline{D}_n$ is a finite union of compact intervals. 
\end{theorem}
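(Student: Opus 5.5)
By the reduction preceding the statement, it is enough to find, for each $n\geq 2$, a prime $p_n$ such that inequality \eqnref{to_show} holds for every prime $p\geq p_n$. Theorem \ref{finite union lemma} then applies to $a_p=-\log(1-p^{-n})$. This series is positive, non-increasing in $p$, and absolutely convergent with infinitely many nonzero terms, so $A(a_p)$ is a finite union of compact intervals. Since $x\mapsto e^{-x}$ is a homeomorphism onto its image, $\overline D_n$ is also a finite union of compact intervals.

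The plan is to start from \eqnref{Hopf2}, valid for $p\geq 17$, and bound $\pi(p)$ from above by an explicit Rosser--Schoenfeld type estimate, for instance $\pi(x)<1.26\,x/\log x$ for $x>1$. This gives
\[
\sum_{q>p}\frac{1}{q^n}\;\geq\;\frac{1}{p^{n-1}}\left(\frac{n}{(n-1)\log p+1}-\frac{1.26}{\log p}\right).
\]
The bracket has leading behaviour $\frac{1}{\log p}\left(\frac{n}{n-1}-1.26\right)$. That is positive only when $n/(n-1)>1.26$, i.e.\ $n\leq 4$. For larger $n$ this crude route fails, so the argument must be split.

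\textbf{Case $2\leq n\leq 4$.} Here the bracket is at least $c_n/\log p$ for some $c_n>0$ once $p$ is large. The left side of \eqnref{to_show} is $\frac{1}{p^n-1}\approx p^{-n}$. Hence we need $c_n p/\log p\gtrsim 1$, which holds for all large $p$. Choosing $p_n$ explicitly is routine.

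\textbf{Case $n\geq 5$.} This is the main obstacle, because the sharp constant in $\pi(x)\sim x/\log x$ is exactly what is at stake. I would avoid Abel summation and use a cruder, more robust argument: the next prime alone should dominate. By Bertrand's postulate (or a stronger prime-gap result) there is a prime $q\in(p,2p]$, and this gives $\sum_{q>p}q^{-n}\geq (2p)^{-n}$. That is too weak against $(p^n-1)^{-1}$.

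Instead I would use a sharper prime-gap bound: for large $p$ there is a prime in $(p,p(1+\varepsilon)]$, e.g.\ Nagura's or Dusart's result that $(x,\,x(1+1/(25\log^2 x))]$ contains a prime. Taking any prime $q$ in that interval gives
\[
\sum_{q>p}q^{-n}\geq p^{-n}\bigl(1+\tfrac{1}{25\log^2 p}\bigr)^{-n}.
\]
This is still slightly less than $p^{-n}$, so one prime is not enough. I would therefore take the primes in the interval $(p,2p]$. By Dusart-type bounds there are at least $c\,p/\log p$ of them, each contributing at least $(2p)^{-n}$. The total is at least $c\,p^{1-n}2^{-n}/\log p$, which exceeds $2p^{-n}$ once $p\geq p_n$ with $p_n$ exponential in $n$.

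This counting argument actually works uniformly for all $n\geq 2$, so I would likely adopt it throughout. The only needed input is a lower bound $\pi(2p)-\pi(p)\geq c\,p/\log p$ for $p\geq 17$. I would obtain this from the Rosser--Schoenfeld bounds $x/\log x<\pi(x)<1.26\,x/\log x$ applied carefully, or from Ramanujan-prime estimates. Existence of $p_n$ suffices for the theorem, so no explicit optimisation is needed.
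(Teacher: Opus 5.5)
Your proof is correct, but it establishes \eqref{to_show} by a different route from the paper's.

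The paper keeps the Abel-summation bound \eqref{Hopf2} for every $n$ and repairs exactly the defect you diagnosed in it. Instead of a constant like $1.25506$, it inserts the Rosser--Schoenfeld bound $\pi(x)<x/(\log x-\tfrac32)$, whose leading constant is $1$. The near-cancellation between $\frac{n}{p^{n-1}((n-1)\log p+1)}$ and $\pi(p)/p^n$ then leaves the positive quantity $\frac{\log p-\frac32 n-1}{p^{n-1}((n-1)\log p+1)(\log p-\frac32)}\sim\frac{1}{(n-1)p^{n-1}\log p}$. Multiplying by $p^n-1$ gives something of order $p/\log p\to\infty$, with no case split.

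You sidestep the cancellation entirely. Truncating the tail at $2p$ and bounding each surviving term below by $(2p)^{-n}$, you only need $\pi(2p)-\pi(p)\gg p/\log p$. This does follow from $x/\log x<\pi(x)<1.25506\,x/\log x$ for $x\ge 17$, precisely because $2>1.25506$; it gives roughly $\pi(2p)-\pi(p)\ge 0.35\,p/\log p$ for $p\ge 17$. Combined with $1/(p^n-1)\le 2p^{-n}$, this yields \eqref{to_show} as soon as $p/\log p$ exceeds a constant times $2^{n}$.

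Your argument is more elementary and more robust. It needs no Abel summation, no exponential-integral estimate, and no upper bound for $\pi$ with the sharp constant; even Chebyshev-strength bounds would suffice if $2p$ were replaced by $Kp$ for a large fixed $K$. The paper's route, by contrast, captures the tail with its asymptotically correct main term, whereas yours gives away a factor exponentially small in $n$. That loss is harmless here, since only the existence of $p_n$ is needed.

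The case split and the single-prime attempts (Bertrand, Nagura) in your write-up are superseded by the counting argument, which works uniformly for all $n\ge 2$, and can simply be deleted.
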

\begin{proof}
    By \cite[Thm.2]{Rosser-Schoenfeld} for $x \geq e^{3/2} \approx 4.5$, $\pi(x)<x/(\log x-3/2)$. Putting this into \eqnref{Hopf2} we get for $p\geq 17$
    \begin{equation}
        \sum_{q>p}\frac{1}{q^n}\geq \frac{n}{p^{n-1}(\log p^{n-1} + 1)} -\frac{1}{p^{n-1}\log p} = \frac{\log p -\frac{3}{2}n-1}{p^{n-1}(\log p^{n-1}+1)(\log p-\frac{3}{2})}.
    \end{equation}
We can now compare $\sum_{q>p}a_q\geq \sum_{q>p}\frac{1}{q^n}$ with $a_p\leq\frac{1}{p^{n-1}}$:
\[\frac{1}{a_p}\sum_{q>p}a_q \geq \frac{\sum_{q>p}\frac{1}{q^n}}{1/{(p^n-1)}} \geq \frac{(p^n-1)(\log p-\frac{3}{2}n-1)}{p^{n-1}(\log p^{n-1}+1)(\log p-\frac{3}{2})}.\]
Since the right hand side goes to infinity as $p \to \infty$ there exists a prime $p_n$ so that for all $p\geq p_n$, $a_p\leq \sum_{q>p}a_q$. 
\end{proof}

We conclude by explicitly computing the gaps in the set $D_2$, that is the set of possible densities of planes in $\R^3$. $D_2$ is shown in Figure \ref{fig:n=2 density} along with the endpoints of the empty intervals.
    
\begin{figure}[h]
\centering
\includegraphics[width=.6\linewidth]{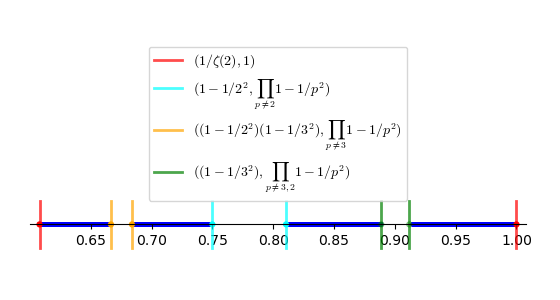}
\caption{Blue points are the values of $J_2(b)/b^2$ for all $b$ with primes in its prime factorization up to 61. The color matching vertical lines indicate empty intervals with labels as in the legend. The proof that these are the intervals of $\overline D_2$ is given in Theorem \ref{D2 intervals}.}
\label{fig:n=2 density}
\end{figure}

\begin{lemma}\label{lem:5bound}
Fix $n=2$ so that $a_p = -\log(1-1/p^2)$. For every prime $p \ge 5$, $
a_p \leq \sum_{q>p} a_q
$.
\end{lemma}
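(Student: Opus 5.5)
The plan is to reduce, as in the discussion preceding Theorem \ref{finite union of intervals}, to the sufficient inequality \eqnref{to_show} with $n=2$, namely
\[
\frac{1}{p^2-1}\leq \sum_{q>p}\frac{1}{q^2}.
\]
This suffices because $a_p\le \frac{1/p^2}{1-1/p^2}=\frac{1}{p^2-1}$ and $a_q\ge 1/q^2$. We then split into a finite range of small primes, to be checked directly, and a tail range handled analytically through \eqnref{Hopf2}.

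For the analytic range, start from \eqnref{Hopf2} with $n=2$:
\[
\sum_{q>p}\frac1{q^2}\ \ge\ \frac{2}{p(\log p+1)}-\frac{\pi(p)}{p^2}, \qquad p\ge 17.
\]
The crude bound $\pi(x)<x/(\log x-3/2)$ used in Theorem \ref{finite union of intervals} is too weak here: it makes the right side negative unless $\log p>4$. I would instead use the sharper Rosser--Schoenfeld upper bound $\pi(x)<\frac{x}{\log x}\bigl(1+\frac{3}{2\log x}\bigr)$, valid for $x>1$. This gives
\[
\sum_{q>p}\frac1{q^2}\ \ge\ \frac1p\left(\frac{2}{\log p+1}-\frac{1}{\log p}-\frac{3}{2\log^2 p}\right).
\]
Writing $L=\log p$, the bracket equals $\frac{L^2-3L/2-3/2}{L^2(L+1)}$ after clearing denominators. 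It is positive for $L>2.2$, and it exceeds $\frac{p}{p^2-1}\approx \frac1p$ only when $\frac{L^2-1.5L-1.5}{L^2(L+1)}\gtrsim \frac1p$. Since the left side decays only like $1/L$ while $1/p$ decays exponentially in $L$, this holds once $p$ is moderately large. Concretely, I expect it to hold from a threshold $P$ of a few hundred at most; for instance, at $p=100$ we have $L\approx4.6$ and the left side is about $0.13\gg 0.01$. I would solve for the smallest workable $P$ and verify monotonicity in $p$.

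For the finite range $5\le p<P$, verify the inequality directly. Use
\[
\sum_{q>p}\frac1{q^2} = P(2)-\sum_{q\le p}\frac1{q^2},
\]
where $P(2)=\sum_q q^{-2}\approx 0.452247$ is the prime zeta value. Alternatively, compute $\sum_{q>p}a_q=\log\zeta(2)-\sum_{q\le p}a_q$ exactly using $\zeta(2)=\pi^2/6$, which avoids the lossy reductions entirely and is the safer choice near $p=5$. For $p=5$ the needed comparison is $a_5\approx0.0408$ against $\log(\pi^2/6)-a_2-a_3-a_5$. Here $\log(\pi^2/6)\approx0.4977$, $a_2\approx0.2877$ and $a_3\approx0.1178$, so the tail is about $0.0514$, and the inequality holds. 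The remaining primes up to $P$ are a short computer or hand check in the same way.

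The main obstacle is making the analytic tail estimate kick in early enough that the finite check is small and rigorous. This requires choosing the sharper $\pi(x)$ bound and carefully tracking the validity ranges, $p\ge17$ for the lower bound $x/\log x<\pi(x)$ and for the bound \eqnref{Hopf2}. The lemma also fails at $p=3$, since the tail after $3$ is about $0.092<a_3$, so the small cases must be computed precisely rather than with slack inequalities.
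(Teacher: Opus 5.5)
Your proof is correct and is essentially the paper's: the same reduction to \eqnref{to_show}, the same lower bound \eqnref{Hopf2}, a Rosser--Schoenfeld upper bound on $\pi(x)$, and an exact check of the small primes via $\prod_q(1-q^{-2})=6/\pi^2$. The one real difference is that the paper uses $\pi(x)<1.25506\,x/\log x$, which for $x<e^{5.88}\approx 358$ is actually stronger than $\frac{x}{\log x}\bigl(1+\frac{3}{2\log x}\bigr)$; this lets the analytic bound take over already at $p=17$, leaving only $p=5,7,11,13$ to check. You also have an arithmetic slip: the bracket equals $\frac{L^2-\frac52L-\frac32}{L^2(L+1)}$, not $\frac{L^2-\frac32L-\frac32}{L^2(L+1)}$, so it is positive only for $L>3$ and beats $p/(p^2-1)$ only from $p=31$ on (it fails at $p=29$). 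Your direct verification must therefore cover $5\le p\le 29$, which is harmless since there the tails $\log(\pi^2/6)-\sum_{q\le p}a_q$ exceed $a_p$ comfortably.
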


\begin{proof}
As in the proof of Theorem \ref{finite union of intervals}, it is enough to show $\sum_{q>p}1/q^2 \geq 1/(p^2-1)$ for all $p\geq 5$. By \cite[Cor.1]{Rosser-Schoenfeld}, $\pi(x)<{1.25506x}/{\log(x)}$. Putting this into \eqnref{Hopf2} we get, for $p\geq 17$, 
\begin{align} \sum_{p<q}\frac{1}{q^2} &\geq \frac{2}{p(\log p + 1)}-\frac{1.25506}{p\log p}\nonumber\\
&\geq \frac{2}{p(\log p + A\log p)}-\frac{1.25506}{p\log p}\label{e888}\\
&= \frac{.2}{p\log p}.\nonumber
\end{align}
Here $A = 2/1.45506-1$ is chosen so that $A\log p\geq 1$ for $p\geq 17$ so that inequality \eqnref{e888} holds. It is not hard to check that $.2/(p\log p)>1/(p^2-1)$ for all $p\geq 17$, so the desired inequality holds for all $p\geq 17$. That leaves the primes $p=5,7,11,13$. It is easiest to check $\exp(a_p)\leq \exp(\sum_{q>p}a_q)$ by hand for these primes. For example, in the $p=5$ case this is equivalent to checking $(1-1/4)(1-1/9)(1-1/25)^2=384/625\geq 6/\pi^2 = 1/\zeta(2)$.
\end{proof}
The inequality from Lemma \ref{lem:5bound} fails to hold for $p=2,3$ because $(1-1/4)^2<6/\pi^2$ and $(1-1/4)(1-1/9)^2<6/\pi^2$. As found in the ``First Gap Lemma'' (\cite[21.9]{subsums_Bartoszewicz}) these inequalities tell us exactly where the gaps are in $D_2$. 
\begin{theorem}
\label{D2 intervals}
\[\overline D_2 = \big[\frac{6}{\pi^2},\frac{2}{3}\big]\cup\big[\frac{27}{4\pi^2},\frac{3}{4}\big]\cup\big[\frac{8}{\pi^2},\frac{8}{9}\big]\cup\big[\frac{9}{\pi^2},1\big]\]
\end{theorem}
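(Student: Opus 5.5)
Work in logarithmic coordinates. Put $a_p=-\log(1-p^{-2})$ and $T=\sum_p a_p=\log\zeta(2)$. As noted before Theorem \ref{finite union lemma}, $-\log\overline D_2=A(a_p)$. So it suffices to show
\[A(a_p)=[0,a_3]\cup[a_2,a_2+a_3]\cup\Big[T-a_2-a_3,\,T-a_3\Big]\cup\Big[T-a_2,\,T\Big].\]
Exponentiating $x\mapsto e^{-x}$ then gives exactly the four intervals in the statement:
\[e^{-a_3}=\tfrac89,\quad e^{-a_2}=\tfrac34,\quad e^{-a_2-a_3}=\tfrac23,\quad e^{-T}=\tfrac6{\pi^2},\quad e^{-(T-a_3)}=\tfrac{8}{\pi^2}\cdot\tfrac{9}{8}\cdot\tfrac{8}{9}\ldots\]
These are checked directly; for instance $e^{-(T-a_2)}=\frac{6}{\pi^2}\cdot\frac43=\frac8{\pi^2}$ and $e^{-(T-a_3)}=\frac{6}{\pi^2}\cdot\frac98=\frac{27}{4\pi^2}$. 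Pairing these in decreasing order of density reproduces the four intervals.

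\textbf{Step 1 (tail primes fill an interval).} Let $T_5=\sum_{p\ge5}a_p=T-a_2-a_3$. By Lemma \ref{lem:5bound}, the sequence $(a_p)_{p\ge5}$ is decreasing and satisfies $a_p\le\sum_{q>p}a_q$ for every $p\ge5$. By Kakeya's classical criterion (the ``all $k$'' case underlying Theorem \ref{finite union lemma}), its achievement set is therefore the full interval $[0,T_5]$. I would include the short greedy-algorithm argument: given $x\in[0,T_5]$, take $a_p$ whenever it still fits below $x$. The inequality guarantees that whenever a term is skipped, the remaining tail still covers the deficit, so the partial sums converge to $x$.

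\textbf{Step 2 (assemble).} Every subset $S$ of primes splits as $S\cap\{2,3\}$ together with a tail part. Hence
\[A(a_p)=\bigcup_{\epsilon_2,\epsilon_3\in\{0,1\}}\Big(\epsilon_2a_2+\epsilon_3a_3+[0,T_5]\Big).\]
This union is the four intervals $[0,T_5]$, $[a_3,a_3+T_5]$, $[a_2,a_2+T_5]$ and $[a_2+a_3,T]$. It remains to show how they overlap and where the gaps lie. Two strict inequalities are needed:
\begin{itemize}
\item $T_5<a_3$, which is the failure at $p=3$ recorded after Lemma \ref{lem:5bound}, namely $(1-1/4)(1-1/9)^2<6/\pi^2$;
\item $a_3+T_5<a_2$, which is equivalent to the failure at $p=2$, namely $(1-1/4)^2<6/\pi^2$.
\end{itemize}
Together these show that the four intervals are pairwise disjoint and ordered as listed. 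The listed endpoints are then expressed in terms of $a_2$, $a_3$ and $T$, matching the statement after exponentiation.

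\textbf{Main obstacle.} The real analytic input is Step 1, which is handled by Lemma \ref{lem:5bound} and the greedy argument. The care lies in the bookkeeping: the two strict gap inequalities, and the closure issue. $D_2$ itself contains only finite subsums together with the value $6/\pi^2$, while its closure is $A(a_p)$. This identification was already remarked in the text before Theorem \ref{finite union lemma}, so taking closures here is legitimate.
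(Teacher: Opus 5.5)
Your Steps 1 and 2 are the paper's argument. Lemma \ref{lem:5bound} together with Kakeya's criterion makes the subsums over $p\ge 5$ fill $[0,T_5]$. Splitting on whether $2$ and $3$ are used then gives $[0,T_5]\cup[a_3,a_3+T_5]\cup[a_2,a_2+T_5]\cup[a_2+a_3,T]$. The images of these four intervals under $x\mapsto e^{-x}$ are exactly $[9/\pi^2,1]$, $[8/\pi^2,8/9]$, $[27/(4\pi^2),3/4]$ and $[6/\pi^2,2/3]$. Your two strict inequalities $T_5<a_3$ and $a_3+T_5<a_2$ are correct. They are not needed for the set equality, but they do show the four intervals are genuinely disjoint, which the paper only alludes to in the First Gap Lemma remark.

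The reduction you state at the outset, however, is false and must be replaced by the Step 2 identity. The intervals $[0,a_3]$, $[a_2,a_2+a_3]$, $[T-a_2-a_3,T-a_3]$ and $[T-a_2,T]$ attach the correct endpoints to the wrong partners. Since $T-a_2-a_3<a_3$ (your own first gap inequality) and $a_3<a_2$, already $[0,a_3]\cup[T-a_2-a_3,T-a_3]\cup[T-a_2,T]=[0,T]$. So that display would assert $\overline D_2=[6/\pi^2,1]$, contradicting Theorem \ref{achievement set}. For instance, $[0,a_3]$ maps to $[8/9,1]$, which contains the excluded gap $(8/9,9/\pi^2)$.

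Two smaller points. ``Pairing these in decreasing order of density'' is not a justification; the pairing is forced by Step 2. The line $e^{-(T-a_3)}=\frac{8}{\pi^2}\cdot\frac98\cdot\frac89\ldots$ is garbled; the correct value $27/(4\pi^2)$ appears in your next sentence. With the first display replaced by the four intervals from Step 2, the proof is complete.
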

\begin{proof}
    By Lemma \ref{lem:5bound}, for $p\geq 5$, $a_p\leq\sum_{q>p}a_q$. Thus, by \cite{kakeya1914partial} every element in the interval $[0,\sum_{q\geq 5}a_q]$ is a subsum of the series $\sum_{q\geq 5}a_q$. Exponentiating, we see that the partial products of $\prod_{q\geq 5}(1-1/q^2)$ are dense in $[\prod_{p\ne2,3}1-1/p^2,1] = [9/\pi^2,1]$ and hence $\overline D_2\supset [9/\pi^2,1]$.

    By adding $a_2$ to the subsums of $\sum_{q\geq 5}a_q$, we see that every element in $[a_2,a_2+\sum_{q\geq 5}a_q]$ is a subsum of the series $\sum a_q$. In particular, 
    \[\overline D_2\supset  \big[\prod_{p\ne3}1-1/p^2,3/4\big] = \big[\frac{27}{4\pi^2},\frac{3}{4}\big]\]
    Adding $a_3$ or $a_2+a_3$ to the interval behaves accordingly, showing $\overline D_2$ contains the sets. On the other hand, any element in $\overline D_2$ is in one of these sets. Which one is determined by whether or not the product includes the primes 2 and 3.
\end{proof}

\bibliographystyle{plain} 
\bibliography{refs} 

\end{document}